\theoremstyle{plain}
\newtheorem{theorem}{Theorem}
\newtheorem{proposition}{Proposition}
\newtheorem{corollary}{Corollary}
\newtheorem{lemma}{Lemma}
\theoremstyle{remark}
\title{Instability of LIFO Queueing Networks}
\author{Maury Bramson, \\
University of Minnesota\\{bramson@math.umn.edu}}
\begin{document}
\title{Instability of LIFO Queueing Networks}
\author{Maury Bramson \\
University of Minnesota\\{bramson@math.umn.edu}}

\maketitle
	\begin{abstract}  
Under the last-in, first-out (LIFO) discipline, jobs arriving later at a class 
always receive priority of service over
earlier arrivals at any class belonging to the same station.  Subcritical LIFO queueing networks with Poisson external arrivals are known to be stable, but an open problem has been whether this is also the case 
when external arrivals are given by renewal processes.  Here, we show that
this weaker assumption is not sufficient for stability by constructing a family of
examples where the number of jobs in the network increases to infinity over time.

This behavior contrasts with that for the other classical disciplines:  processor sharing (PS), 
infinite server (IS), and first-in, first-out (FIFO), which are stable under general 
conditions on the renewals of external arrivals.  Together with 
LIFO, PS and IS constitute the classical symmetric disciplines; with the
  inclusion of
FIFO, these disciplines constitute the classical homogeneous disciplines.  Our examples show that a
general theory for stability of either family is doubtful.
\end{abstract}

\section{Introduction} 
\label{intro}

Under the preemptive last-in, first-out (LIFO) discipline (or policy), jobs in a queueing network 
arriving at a class always receive priority of service over earlier arrivals at any class
belonging to the same station.  Service for the preempted
jobs continues after later-arriving jobs have been
served.  This rule is quite
natural, and corresponds to later occurring tasks always being given priority over earlier
ones, for instance, new jobs being given priority in a piled stack of work to be done.
LIFO
is a GAAP accepted accounting method for inventory.

The LIFO discipline is one of the four ``classical"
disciplines that were analyzed in the famous papers \cite{BCMP75} and \cite{Ke75, Ke76}, the other 
disciplines being processor sharing (PS), infinite server (IS), and first-in, first-out (FIFO).
In these papers, the stability of these four queueing networks 
was shown when the Poisson input is subcritical, that is, the corresponding
Markov processes are
positive recurrent given that work on the average arrives at a slower rate than 
it would be served if all servers are fully active when there are jobs in the network.  

Since these papers,
substantial progress has been made in showing the stability of subcritical queueing networks under 
the PS, IS, and FIFO disciplines when the input is generalized from Poisson to 
renewal processes.  However, little is currently known about the stability of the 
LIFO discipline in the non-Poisson setting.  In this 
paper, we demonstrate instability for a family of subcritical LIFO queueing networks by
showing that the number of jobs in the network increases to infinity over time.  

To define this family, we first give the
network topology and then its external 
arrival and service processes.  The network consists of
four stations, with a total
of six classes, and  is pictured in
Figure \ref{fig1}.  Jobs enter the network at either Class 1 or Class 4.  
The jobs arriving at Class 1 are routed successively
through Classes 2 and 3, before leaving the network, and the jobs arriving at 
Class 4 are routed successively through Classes
5 and 6 before leaving.  Classes 1 and 6 together comprise Station I, 
Classes 3 and 4 together comprise Station IV, and
Classes 2 and 5 each form their own single-class stations, Stations II and III.  
Except for the presence of Classes 2
and 5, the network has the same structure as the well-known Rybko-Stolyar network.

The external arrival and service processes are each symmetrically defined, with jobs entering the upper route following the same rules
as those entering the lower route.  For each of the two routes, external arrivals are given by independent renewal processes, whose interarrival
times are i.i.d. random variables with measure $\nu$ given by
\begin{alignat}{1}
\label{entrancelaws}
&\nu(dt) = \tfrac{1}{M} \text{e}^{-\beta(t - \gamma M)}dt  \quad \text{for } t\in [\gamma M,2M], \\
\notag & \nu(\{\tfrac{1}{M^2}\}) = 1 - \tfrac{1}{M}, 
\end{alignat}

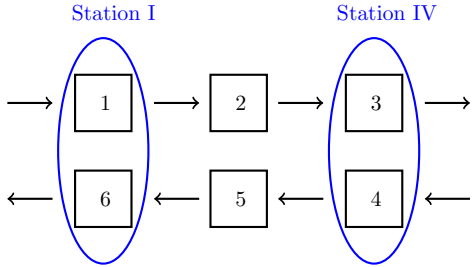
\begin{figure} 
\centering
\begin{tikzpicture}[thick, scale=0.75, every node/.style={transform shape}]
\draw[thick, ->] (-2.2,1) -- (-1.4,1); 
\draw (-1,0.5) rectangle (0,1.5);
\draw[thick,->] (0.4,1) -- (1.2,1); 
\draw (1.4,0.5) rectangle (2.4,1.5);
\draw[thick,->] (2.6,1) -- (3.4,1); 
\draw (3.8,0.5) rectangle (4.8,1.5);
\draw[thick,->] (5.2,1) -- (6.0,1); 
\draw[blue] (-.5,0.15) ellipse (.8 and 2);
\draw[blue](4.3,0.15) ellipse (.8 and 2);
\draw[thick, ->] (-1.4,-.7) -- (-2.2,-.7); 
\draw (-1,-1.2) rectangle (0,-.2);
\draw[thick,->] (1.2,-.7) -- (0.4,-.7); 
\draw (1.4,-1.2) rectangle (2.4,-.2);
\draw[thick,->] (3.4,-.7) -- (2.6,-.7); 
\draw (3.8,-1.2) rectangle (4.8,-.2);
\draw[thick,->] (6.0,-.7) -- (5.2,-.7); 

 \filldraw[black] (-.70,1)  node [anchor=west] {1};
 \filldraw[black] (1.7,1)  node [anchor=west] {2};
\filldraw[black] (4.1,1)  node [anchor=west] {3};
 \filldraw[black] (-.70,-.7)  node [anchor=west] {6};
\filldraw[black] (1.7,-.7)  node [anchor=west] {5};
\filldraw[black] (4.1,-.7)  node [anchor=west] {4};

\filldraw[blue] (-1.20,2.6)  node [anchor=west] {Station I};
\filldraw[blue] (3.5,2.6)  node [anchor=west] {Station IV};
\end{tikzpicture}
\caption{Squares in top row correspond to Classes 1-3, in order of appearance along their route; similarly, 
squares in lower row correspond to Classes 4-6, in order of appearance.  Classes in left oval belong
to Station I, classes in right oval belong to Station IV; Classes 2 and 5 belong to the one-class stations, Stations II and III.}
\label{fig1}
\end{figure}
\noindent where $M$ is assumed to be large, and $\beta$ and $\gamma$ are chosen so that 
$\nu$ has both measure and mean 1.  (One has $\beta > 1$, $\gamma <1$, with $\beta \sim 1, \gamma \sim 1$ for large $M$.)

The service laws at Classes 1, 3, 4, and 6 are all deterministic, whereas the service laws at Classes 2 and 5 are exponentially
distributed, with the means at different classes being given by
\begin{equation}
\label{means}
m_1 = m_4 = \delta^3, \quad m_2 = m_5 = 1 - \delta,  \quad m_3 = m_6 = 1 -\delta + \delta^3,
\end{equation}
where $\delta = 1/M^{1/15}$.  We assume that all interarrival and service times are independent of each other.  Since, for small $\delta$,
\begin{equation}
\label{subcritical}
m_1 + m_6 = m_3 + m_4 = 1 - \delta + 2\delta^3 < 1, \quad m_2 = m_5 = 1 -\delta <1,
\end{equation}
 the system is subcritical.

The system is LIFO, with jobs entering a given class always receiving priority of service over
earlier arrivals at any class of its station.  (In case of a ``tie", either priority is allowed.)  We assume that the network is preemptive resume, with jobs currently in service
being interrupted by arrivals, and continuing their service in the absence of more recent arrivals.  Jobs originally in
the network are assigned an arbitrary ordering for service at their class.

Denoting by $Z(t)$ the total number of jobs in the network at time $t$, Theorem \ref{theorem1} asserts that $Z(t) \rightarrow\infty$ as $t \rightarrow\infty$.
\begin{theorem}
\label{theorem1}
Suppose $M$ is sufficiently large.  For any LIFO queueing network with routing as 
in Figure \ref{fig1}, and external arrival and service processes
as in (\ref{entrancelaws})-(\ref{means}),
\begin{equation}
\label{firstlimit}
Z(t)\rightarrow\infty  \quad \text{ almost surely as } t\rightarrow \infty.
\end{equation}
\end{theorem}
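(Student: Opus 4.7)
The plan is to analyze the network on time scales of order $M$ and to show that, on each such macro-cycle, the number of jobs accumulates by a positive fraction of $M$. The bursty structure of $\nu$ is the key: since a fraction $1 - 1/M$ of interarrival times are the negligible $1/M^2$ and only a fraction $1/M$ are of order $M$, typical sample paths show a sequence of \emph{bursts} of roughly $M$ jobs each, separated by quiet intervals of length $\sim M$. The long-term arrival rate into each route is still $1$, but the arrivals are highly concentrated in time, and this concentration is what LIFO converts into propagating delays.

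I would first trace a single burst along the upper route. A burst of $\sim M$ jobs arriving at Class 1 is served in time $\sim M m_1 = M \delta^3 \ll M$, so the jobs essentially emerge from Class 1 as a cluster and queue at Class 2. Class 2, with $m_2 = 1-\delta$, takes time $\sim M$ to clear them, feeding jobs into Class 3 gradually. In isolation, Class 3 would keep up with Class 2's output, since $m_3 \approx m_2$. The instability comes from LIFO interactions at the shared stations: at Station IV, each burst arriving externally at Class 4 preempts Class 3 service for a duration $\sim M m_4 = M \delta^3$, and by symmetry the Class 5 output entering Class 6 preempts Class 1 at Station I. I would quantify the accumulated preemption per macro-cycle and argue that Classes 3 and 6 fall behind by a positive fraction of $M$, producing a net excess of order $M$ jobs.

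To close the argument, I would introduce a Lyapunov-like functional $V$—for instance, the sum of the queue lengths at Classes 3 and 6, or a suitable workload—and establish an inequality of the form $\mathbb{E}[V(T_{k+1}) - V(T_k)\mid \mathcal{F}_{T_k}] \geq c M$ at successive burst epochs $T_k$, for some $c>0$ once $M$ is large, together with a concentration estimate for $V(T_{k+1}) - V(T_k)$. Standard Borel--Cantelli and strong-law arguments over many cycles then yield $Z(t)\to\infty$ almost surely. The main obstacle is the quantitative control of the LIFO preemption dynamics. LIFO priority is highly state-dependent and sensitive to the precise order and timing of arrivals, and since Classes 2 and 5 inject exponential randomness, tracking which job is in service at each station requires delicate sample-path couplings. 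One must also rule out "favorable" alignments—where the two routes' bursts happen to avoid each other and allow the system to recover—by showing that adverse alignments dominate with sufficient frequency, which is where the precise choice of exponents $\delta = M^{-1/15}$ and of the tail shape of $\nu$ should enter.
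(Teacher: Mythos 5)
Your overall frame—bursty arrivals are the engine, and LIFO converts the bunching into persistent delay—is the right intuition, but the mechanism you propose runs in the wrong direction, and the arithmetic attached to it does not close. You have a burst of $\sim M$ jobs at Class~4 preempting Class~3 for a total duration $\sim M m_4 = M\delta^3$, and similarly you want Classes~3 and~6 to ``fall behind by a positive fraction of $M$.'' In fact the LIFO priority works the other way at Station~IV: the entire burst at Class~4 arrives within time $\sim M/M^2 = 1/M$, and thereafter Class~3 jobs arrive one-by-one from Class~2 at roughly unit rate throughout the $\sim M$-long gap. Each such Class~3 arrival is \emph{more recent} than every job in the burst and hence, under LIFO, has priority at Station~IV; the burst at Class~4 can occupy the server for only a tiny interval before the next Class~3 arrival preempts it. It is therefore Class~4, not Class~3, that is starved, and Class~3's workload grows only at the drift rate $m_3 - m_2 = \delta^3$, i.e.\ order $\delta^2 N$ over a cycle of length $\sim N/\delta$, which is negligible. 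Indeed the paper proves exactly that $W_3$ stays small (of order $\delta^2 N$) and that the large accumulation occurs at Class~4 (order $N/\delta$ jobs). Even within your own accounting, the preemption you identify is only $M\delta^3 = M\cdot M^{-1/5} = o(M)$, so a Lyapunov drift of order $cM$ per cycle of length $M$ does not materialize, and the Borel--Cantelli step has nothing to stand on.

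The second structural piece missing from your sketch is the alternating (Rybko--Stolyar-style) two-phase cycle. You treat the two stations as symmetrically preempting each other within a single macro-cycle, but in the actual dynamics the phases alternate: during the first phase Class~2 is draining, Class~3 is busy, Class~4 is blocked and swells to $\sim N/\delta$; meanwhile Class~5 and Class~6 are nearly empty (since almost nothing leaves Class~4), so Class~1 is \emph{not} significantly blocked. When Class~2 finally empties, Class~4 drains quickly into Class~5, and the configuration is, up to the switch of upper and lower routes, a scaled-up copy of the start. This is why the paper proceeds by an explicit induction on cycles (Theorem~\ref{propinductionstep}), showing the state at time $T$ is a multiple (by $\sim 1/4\delta$) of the state at time $0$ with roles swapped, rather than by a Lyapunov inequality on queue lengths at Classes~3 and~6. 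A correct version of your program would also have to prove that the ``good'' initial configuration needed to start a cycle recurs with positive probability unless $Z(t)\to\infty$, which is the final step of the paper's proof and is absent from yours.
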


\noindent The analog of (\ref{firstlimit}) holds for the total  work
in the network. We comment on this immediately
after the proof of Theorem  \ref{theorembehavioratS}.

When $L = (1-\delta +\delta^3)/\delta^3$ is an integer, one can create a
second family of queueing networks by partitioning
the Classes 3 and 6 into 
$L$
new classes each, Classes 3.1,...,3.L and 6.1,...,6.L,  creating 
in this manner new Stations I and IV that
each have $L+1$ classes (see Figure
\ref{fig2}).  By
immediately continuing service at 
Classes 3.($\ell$ +1) and 6.($\ell$+1)
 for jobs
departing from Classes 3.$\ell$ and 6.$\ell$,
the new
queueing networks thus defined will also have the LIFO discipline. 

The external arrival processes of this second family 
are again defined as in (\ref{entrancelaws}).  The service processes for Classes 1, 2, 4, and 5 are
as in the first family, and have means given by (\ref{means}).  The service times for Classes 3.1 through 3.L and 6.1 through 6.L
are deterministic, and satisfy
\begin{equation}
\label{modifiedmeans}
m_{3.1} = \ldots = m_{3.L} = m_{6.1} = \ldots = m_{6.L} = \delta^3.
\end{equation}
Since the mean service times of each of the classes
at Stations I and IV is equal to $\delta^3$, Stations I and IV are of \emph{Kelly type}, that is, the mean service times of the classes at the
station are equal.  The following analog of (\ref{subcritical}) holds,
\begin{equation}
\label{modifiedsubcritical}
m_1 + \sum_{\ell = 1}^L m_{6.\ell}  = m_4 + \sum_{\ell = 1}^L m_{3.\ell}
= 1 - \delta + 2\delta^3 < 1, \quad m_2 = m_5 = 1 -\delta <1,
\end{equation}
and so the system is subcritical.

Corollary \ref{corollary1} therefore immediately follows
from Theorem \ref{theorem1}.

\begin{corollary}
\label{corollary1}
Suppose $M$ is sufficiently large.  For any LIFO queueing network with routing as 
in Figure \ref{fig2}, external arrival processes as in (\ref{entrancelaws}), and service processes
for Classes 1, 2, 4, and 5 as in (\ref{means}) and Classes 3.$\ell$ and 6.$\ell$ as in (\ref{modifiedmeans}),
\begin{equation}
\label{firstlimitkelly}
Z(t)\rightarrow\infty  \quad \text{ almost surely as } t\rightarrow \infty.
\end{equation}
\end{corollary}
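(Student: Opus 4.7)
The plan is to couple the modified network of Figure \ref{fig2} with the original network of Figure \ref{fig1} in such a way that the queue-length process $Z(t)$ agrees pathwise in the two networks, and then to appeal directly to Theorem \ref{theorem1}.

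For the coupling I would use common external arrival times at Classes 1 and 4 and common exponential service times at Classes 2 and 5. The deterministic services at the remaining classes require no coupling beyond matching constants; crucially, since the $L$ sub-classes $6.1,\dots,6.L$ each have deterministic service $\delta^3$, a job traversing all of them in the modified network accumulates total service $L\delta^3 = m_6$ at Station I, which equals the single deterministic service $m_6$ that it would receive at Class 6 in the original network. The analogous equality $L\delta^3 = m_3$ holds for the sub-classes of Class 3 at Station IV.

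The main step is then to verify that, under this coupling, the two networks evolve identically, with the natural identification that a job at Class $6.\ell$ in the modified network corresponds to a job at Class 6 in the original network partway through its deterministic service (and similarly at Station IV). The subtle point is that both stations are LIFO, and in the modified network a job's priority at its station is effectively reset whenever it routes from Class $6.\ell$ to Class $6.(\ell+1)$, whereas in the original network no such reset occurs. I claim the reset has no effect on the dynamics: at the moment of such a transition the job is in service, so under LIFO it already has the latest arrival time among jobs at the station; since the transition is instantaneous and no fresh external or upstream arrival happens precisely at that instant, the job still has the latest priority after the reset and continues to be served. Hence the server at Station I is working on the same physical job at all times in both networks.

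Propagating this observation event by event through external arrivals, individual class completions, and inter-class routings yields a pathwise identification of the two systems, and in particular of $Z(t)$. The conclusion (\ref{firstlimitkelly}) then follows directly from Theorem \ref{theorem1}. I anticipate the only delicate point to be the bookkeeping around the priority reset just described; this is precisely why the corollary is stated as following ``immediately'' from the theorem once the coupling is set up correctly.
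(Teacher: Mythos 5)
Your coupling is exactly the paper's (implicitly stated) argument: partitioning the deterministic services at Classes 3 and 6 into $L$ equal consecutive pieces with immediate continuation of service yields a LIFO network that evolves pathwise identically to the network of Figure~\ref{fig1}, so Corollary~\ref{corollary1} reduces directly to Theorem~\ref{theorem1}. Your observation that the job in service at the moment of a transition from Class $6.\ell$ to Class $6.(\ell+1)$ necessarily sits atop Station I's LIFO stack, and hence remains there after its class-arrival time is refreshed, is precisely the point the paper takes for granted when it asserts that the modified network ``will also have the LIFO discipline.''
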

\begin{figure}
\centering
\begin{tikzpicture}[thick,scale=1.00, every node/.style={scale=0.70}]
\draw[thick,->] (-3.7,1) -- (-1.4,1); 
\draw (-1.2,0.5) rectangle (-0.8,1.5);
\draw[thick,->] (-0.6,1) -- (1,1); 
\draw (1.20,0.5) rectangle (2.10,1.5);
\draw[thick,->] (2.3,1) -- (2.95,1); 
\draw (3.0,0.5) rectangle (3.4,1.5);
\draw[thick,->] (3.45,1) -- (3.73,1); 
\draw (3.78,0.5) rectangle (4.18,1.5);
\draw[thick,->] (4.22,1) -- (4.50,1); 
\draw (4.61,1) ellipse (0.015 and 0.015);
\draw (4.69,1) ellipse (0.015 and 0.015);
\draw (4.77,1) ellipse (0.015 and 0.015);
\draw[thick,->] (4.88,1) -- (5.16,1); 
\draw (5.2,0.5) rectangle (5.6,1.5);
\draw[thick,->] (5.7,1) -- (6.9,1); 
\draw[blue] (-1,0.15) ellipse (2 and 2);
\draw[blue] (4.3,0.15) ellipse (2 and 2);
\draw[thick,->] (-2.4,-.7) -- (-3.7,-.7); 
\draw (-2.28,-1.2) rectangle (-1.88,-.2);
\draw[thick,->] (-1.56,-.7) -- (-1.84,-.7); 
\draw (-1.45,-.7) ellipse (0.015 and 0.015);
\draw (-1.37,-.7) ellipse (0.015 and 0.015);
\draw (-1.29,-.7) ellipse (0.015 and 0.015);
\draw[thick,->] (-.90,-.7) -- (-1.18,-.7); 
\draw (-0.86,-1.2) rectangle (-0.46,-.2);
\draw[thick,->] (-.14,-.7) -- (-0.42,-.7); 
\draw (-0.06,-1.2) rectangle (0.34,-.2);
\draw[thick,->] (1,-.7) -- (0.4,-.7); 
\draw (1.20,-1.2) rectangle (2.10,-.2);
\draw[thick,->] (3.9,-.7) -- (2.3,-.7); 
\draw (4.1,-1.2) rectangle (4.5,-.2);
\draw[thick,->] (6.9,-.7) -- (4.7,-.7); 
\filldraw[black] (-1.16,1.0)  node [anchor=west] {1};
\filldraw[black] (1.50,1.0)  node [anchor=west] {2};
\filldraw[black] (2.94,1.0)  node [anchor=west] {3.1};
\filldraw[black] (3.71,1.0)  node [anchor=west] {3.2};
\filldraw[black] (5.11,1.0)  node [anchor=west] {3.L};
\filldraw[black] (4.138,-.7)  node [anchor=west] {4};
\filldraw[black] (1.47,-.7)  node [anchor=west] {5};
\filldraw[black] (-0.11,-.7)  node [anchor=west] {6.1};
\filldraw[black] (-0.93,-.7)  node [anchor=west] {6.2};
\filldraw[black] (-2.35,-.7)  node [anchor=west] {6.L};
\filldraw[blue] (-1.5,2.6)  node [anchor=west] {Station I};
\filldraw[blue] (3.6,2.6)  node [anchor=west] {Station IV};
\end{tikzpicture}
\caption{Squares in top row correspond to Classes 1, 2, and 3.1-3.L, in order of appearance along their route; similarly, 
squares in lower row correspond to Classes 4, 5, and 6.1-6L, in order of appearance.  Classes in left circle belong
to Station I, classes in right circle belong to Station IV; Class 2 and Class 5 belong to the one-class stations, Stations II and
 III.  All classes at a given station have the same service rule.}
\label{fig2}
\end{figure}
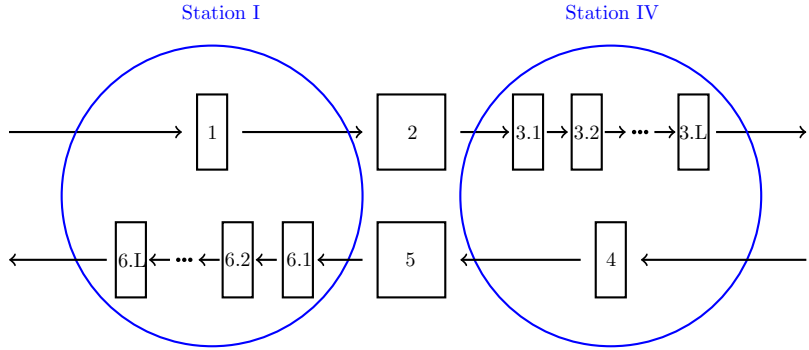

\subsection{Historical context and some philosophy}  
\label{sechistorical}
The evolution of the theory of multiclass queueing networks has been strongly
influenced by explicit results for the four ``classical disciplines", 
PS, LIFO, IS, and FIFO.  The first three of these disciplines are \emph{symmetric} disciplines,
whereas FIFO is a member of the more general family of\emph{ homogeneous} disciplines.  
The distinguishing property for homogeneous 
disciplines is that the distribution of the ordered
position assigned to a job arriving 
at a station and the fraction of
service assigned to a job based on its position both do not
depend on the class of the job within that station.  For symmetric disciplines, the assigned arrival and service 
distributions are 
equal to one another at each station.  
(See the original sources \cite{BCMP75} 
and \cite{Ke75, Ke76, Ke79}, or the monograph \cite{Br08}, for complete
definitions.)

Subcritical networks with a symmetric discipline and Poisson external
arrivals are stable irrespective of the distributions of service times at individual
classes, and the corresponding Markov processes are positive recurrent with
equilibria (i.e., stationary distributions) that have an explicit product form.  Subcritical networks with
a homogeneous discipline, Poisson external arrivals, and exponentially distributed
service times that have the same mean at a given station are also positive recurrent, with
equilibria that have a similar product form.  These two families of disciplines are among
the few disciplines for which the equilibria of multiclass queueing networks are explicitly 
computable.

These results contributed to
overly rosy expectations for the stability of subcritical queueing networks for arbitrary work-conserving
disciplines, even though their equilibria were not expected to be explicitly calculable.  
However, examples in various settings later showed stability need not follow from subcriticality
(see, e.g., \cite{Br94}, \cite{LuKu91}, \cite{RySt92}, \cite{Se94}), including for FIFO networks 
whose classes at a given station have
unequal mean service times.

On the other hand, in  \cite{Da95} and \cite{RySt92}, a machinery
was developed that enabled one to show stability of queueing networks in a wide range of settings,
where external arrivals were allowed to consist of renewal rather than Poisson processes, and no assumptions on the service times of classes were needed.  In one such application,
subcritical FIFO networks, with classes at a given station having the same
mean service time, were shown to be stable (\cite{Br96a}).  
Stability for subcritical PS networks can be shown in a similar manner, and is discussed
in the appendix.
Because of the presence of
an infinite number of available servers, IS queueing networks are stable in
all settings.  Little is currently known about the
stability of subcritical multiclass queueing networks
with the LIFO discipline.

Theorem \ref{theorem1} of this paper shows that 
subcritical queueing networks with the
LIFO discipline need not be stable.   A 
consequence is the absence of a uniform framework
for establishing the stability of symmetric disciplines, in contrast
to when input is Poisson.
On account of Corollary \ref{corollary1}, a subcritical LIFO
network being of Kelly type is also not sufficient for
stability.  So there is no uniform framework
for establishing the stability of homogeneous 
disciplines, again in contrast to the Poisson case.

\subsection{Overview of the paper}
\label{overview}

In Section \ref{sectionstatespace}, we construct the state space.  Since one needs to
keep track of residual times for all jobs except those at Classes 2 and 5, where one
does not wish to know the residual times, the
state space is somewhat nonstandard.

The demonstration of Theorem \ref{theorem1} involves the construction of ``cycles". 
One shows, at the end of each cycle, that the state of the process is typically an approximate multiple
of the state at the beginning of the cycle, except that the roles of Classes 1-3 and
Classes 4-6 have been switched. In Section \ref{sectioninductionstep}, this induction
step, Theorem \ref{propinductionstep}, is stated, and Theorem \ref{theorem1} is
demonstrated using Theorem \ref{propinductionstep}.

In Section \ref{secintuition}, we provide heuristics for the proof of 
Theorem \ref{propinductionstep}, avoiding the technical details.   
We also remark on the behavior of the queueing network
under certain modifications.  The content in this section
is optional, but may be helpful to the reader.

Theorem  \ref{propinductionstep} is demonstrated  by dividing each cycle into two
random time intervals, $[0,S_1]$ and $[S_1,S_1+S_2]$.  The behavior of the network during the much longer time interval $[0,S_1]$ is
analyzed in 
Section \ref{sectionuptoS}, where Theorem \ref{theorembehavioratS} is
demonstrated; most of the technical work in this paper is devoted to showing Theorem \ref{theorembehavioratS}.
 The behavior of the network during the much shorter time interval  $[S_1, S_1+S_2]$ is
analyzed in Section \ref{proofoftheorem}.

In Subsection \ref{sechistorical}, we were somewhat vague about known stability results
for PS queueing networks. In the appendix, we show
stability on a dense set of service time distributions for subcritical PS queueing networks 
whose external arrivals are given by renewal processes.  The result follows 
quickly from results on the stability of subcritical HLPPS queueing networks 
in  \cite{Br96b}. 
An extension to
all service times does not follow in an obvious manner.

\section{State space construction}
\label{sectionstatespace}

In this section, we construct the state space $\mathscr{S}$ of the Markov process
corresponding to the family of LIFO queueing networks in Figure \ref{fig1}.  The space $\mathscr{S}$
consists of points $x$ of the form
\begin{equation}
\label{eqss}
x \in (\mathbb{Z} \times \bar{\mathbb{R}} \times \mathbb{R})^{\infty} \times \mathbb{Z}^2\times \mathbb{R}^2,
\end{equation}
subject to appropriate positivity conditions 
($\bar{\mathbb{R}} := \mathbb{R} \cup \{ \infty\}$).
Only a finite number of the
coordinates of $(\mathbb{Z} \times \bar{\mathbb{R}} \times \mathbb{R})^{\infty}$, indexed by $i$, are assumed to be nonzero.  For
each such nonzero triple, the first coordinate $k_i$ is to be interpreted as the current class of a
job in the network, selected from among the classes $k_i = 1, 3, 4,  6$.   The second coordinate $s_i$ 
measures how long ago such a job entered the class;  we set $s_i = \infty$ for
a job $i$ originally at the class.
The third coordinate $v_i$ measures the residual service
time for a job at the  class.  The first coordinate $k_i$
is given in descending order, followed by $s_i$, also in
descending order.  The coordinates $z_2$ and $z_5$ of $\mathbb{Z}^2$ 
are to be interpreted as the number of jobs at 
the two remaining classes, Class 2 and Class 
5; since they comprise single class stations, it is
not necessary to keep track of arrival times
of jobs, and since we 
wish to preserve the memoriless property of the 
exponentially distributed service times, we do not
include the residual times of jobs in the state space
descriptor. The coordinates  $u_1$ and $u_4$
of $\mathbb{R}^2$ are the residual interarrival times at
Classes 1 and 4.

 We equip the state space $\mathscr{S}$ with the metric
\begin{align}
\label{eqmetric}
d(x,x') \, = \, \sum_{i=1}^{\infty} 
\big((|k_i - k_i'| & +  |s_i - s_i'| + |v_i - v_i'|) \wedge 1 \big) \\
 + \sum_{i=1}^2 & |z_{a_i} - z_{a_i}'| 
+ \sum_{i=1}^2 |u_{b_i} - u_{b_i}'|, \notag
\end{align}
where $a_1 = 2$, $a_2 = 5$, $b_1 = 1$, and $b_2 = 4$.
We denote by $\mathfrak{S}$ the standard Borel 
$\sigma$-algebra inherited from the metric.

The Markov process underlying the LIFO queueing
network in Figure \ref{fig1} is defined to be the stochastic process
$X(t)$, $t \ge 0$, whose state at any time is given by a
point $x_t \in \mathscr{S}$ that evolves according to the LIFO
rule; the accompanying filtration $\mathfrak{F}_t$ is defined in the usual manner.
Although this Markov process is not Feller, it is strong
Markov.  This is not immediate obvious; for more
detail on the construction of the Markov process
and its strong Markov property, see 
\cite{Br08}, Chapter 4.5.  (In our present setting,
the definition of coordinates in (\ref{eqss}) is slightly
different.)

We denote by $z_k$, $k=1,\ldots, 6$, the number of 
jobs in each class and by $z= \sum_{k=1}^6 z_k$ the
number of jobs in the network.  ($z_2$ and $z_5$ are employed
in (\ref{eqmetric}).)  Denote by $w_3$
and $w_6$ the immediate workload at Classes
3 and 6, that is, the sum of the residual service times of all of 
the jobs currently at these classes.  (Only 
the immediate workloads at these classes is used.)

The random analogs of the quantities $z_k$, $z$, 
$w_3$, $w_6$, $u_1$, and $u_4$ corresponding 
to $X(t)$ will be denoted by $Z_k(t)$, $Z(t)$,
$W_3(t)$, $W_6(t)$, $U_1(t)$, and $U_4(t)$.  
We will also employ
$A_k(t)$ to denote the total number of arrivals to Class k
over times $(0,t]$, 
and by $D_k(t)$ the total number of departures from Class k 
over this time interval.

 \section{The induction step}
\label{sectioninductionstep}

In this section, we state the induction
step, Theorem \ref{propinductionstep}, and then prove
Theorem \ref{theorem1} assuming  Theorem \ref{propinductionstep}.  The theorem 
asserts that, at the random time $T$, the number of jobs at Class 5 is a large multiple of the
number originally at Class 2 and there are few jobs or work elsewhere, 
if $\delta$ is small (and hence $M$ is large).
\begin{theorem}
\label{propinductionstep}
Let $X(t)$ be the Markov process associated with the queueing network
in Figure \ref{fig1} satisfying (\ref{entrancelaws})-(\ref{means}).
Suppose that, for large $M$,
$\delta = 1/ M^{1/15}$, and $N\ge 2M/\delta$,
\begin{equation}
\label{eqtime0-1}
Z_2 (0) = N, \quad W_3 (0) \le \delta^2 N, \quad 
\sum_{k\neq 2,3} Z_k(0) \le \delta N.
\end{equation}
Then there exists a 
stopping time $T$, satisfying 
$T \in [N/ 3\delta, 3N/\delta]$, such that
\begin{align}
\label{eqinductionstepA}
Z_5(T) \ge N/4\delta, \quad &  Z_1(T) + Z_2(T) \le 10^3 \delta N, \\
\quad  Z_3(T) = Z_4(T) & = 0, 
\quad W_6 (T) \le \delta^3 N,  \notag
\end{align}
and
\begin{equation}
\label{eqinductionstepB}
Z (t) \ge N/4,  \quad \text{ for all } t\in [0,T],
\end{equation}
all hold on a set $G_{N}$ with $P(G_N) \ge 1- C_{\delta}\text{e}^{-c_{\delta}N}$
for some $C_{\delta},c_{\delta} >0$ depending on $\delta$.
\end{theorem}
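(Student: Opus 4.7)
The plan is to follow the paper's indicated decomposition $T = S_1 + S_2$, with the long first phase $[0,S_1]$ describing the slow drainage of the initial Class 2 backlog together with the build-up of jobs at Class 4, and the short second phase $[S_1, S_1+S_2]$ describing the rapid flushing of the accumulated Class 4 jobs into Class 5. Heuristically, $S_1$ is of order $N/\delta$ and $S_2$ of order $N\delta^2$, so $S_2 \ll S_1$ and $T \in [N/3\delta, 3N/\delta]$ is automatic.

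For Phase 1 I take $S_1$ to be the first time at which the number of departures from Class 2 reaches a suitably chosen target near $N$. The underlying heuristics are: Class 2 drains at rate $\mu_2 = 1/(1-\delta)$ but is continually refilled by Class 1 (whose external arrivals pass through the fast Station I server, $m_1 = \delta^3$, leaving at most transient pileups during $\nu$-bursts), so its \emph{net} drain rate is of order $\delta$, giving $S_1 \sim N/\delta$. Because $\mu_2 > 1$, the arrival rate into Class 3 is transiently supercritical at Station IV, with total work-input rate $\mu_2 m_3 + m_4 \approx 1 + 2\delta^3$, exceeding the server rate $1$ by order $\delta^3$. Integrating over $[0,S_1]$ gives a workload accumulation of order $\delta^3 \cdot (N/\delta) = \delta^2 N$ at Station IV. Under LIFO, this accumulated work is overwhelmingly stored at Class 4: each arriving Class 3 job preempts Class 4 service and ties up the server for time $m_3 \approx 1$, while a Class 4 service takes only $\delta^3$, so only an order-$\delta^3$ fraction of Class 4 arrivals are actually served. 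Of the roughly $N/\delta$ external arrivals to Class 4 over the phase, only $O(N\delta^2)$ depart, yielding $Z_4(S_1) \sim N/\delta$ (with workload $Z_4 \delta^3 \sim N\delta^2$) and $W_3(S_1) = O(\delta^2 N)$. The lower bound $Z(t) \ge N/4$ on $[0,S_1]$ follows because $Z_2(t) + Z_4(t) \approx (N - \delta t) + t \ge N$ throughout.

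For Phase 2 I take $S_2$ to be the first time after $S_1$ that both $Z_3 = 0$ and $Z_4 = 0$. Since $m_4 = \delta^3$, flushing $\sim N/\delta$ Class 4 jobs to Class 5 takes time $\sim N\delta^2$, and the $O(\delta^2 N)$ workload at Class 3 clears on the same timescale (the two classes share the Station IV server, and once the transient Class 2 backlog is drained the station is subcritical again). Over this short interval the external arrival process contributes only $O(N\delta^2)$ new jobs, and Class 5 drains at rate $\mu_5 \approx 1$ for a loss of $O(N\delta^2)$ jobs. This gives $Z_5(T) \ge Z_4(S_1) - O(N\delta^2) \ge N/(4\delta)$; $Z_3(T) = Z_4(T) = 0$ by definition of $S_2$; $W_6(T) \le \delta^3 N$ because Class 6 is served by Station I except during the rare, short Class 1 bursts and each burst contributes work at most $M \delta^3 \ll \delta^3 N$; and $Z_1(T) + Z_2(T) \le 10^3 \delta N$ because the only surviving Class 1 or Class 2 jobs near $T$ come from the last one or two $\nu$-bursts, each of size at most $M \le \delta N/2$ by the hypothesis $N \ge 2M/\delta$.

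The main obstacle is proving the Phase 1 LIFO claims at Station IV rigorously. The preemption dynamics are intrinsically path-dependent, and the renewal interarrival law $\nu$ is strongly bimodal (a mix of tiny interarrivals $1/M^2$ and large ones of order $M$), so standard fluid-limit or heavy-traffic arguments do not directly apply and concentration must be established by hand. The natural route is to partition $[0,S_1]$ into its $\sim N/(\delta M)$ inter-burst epochs, prove a one-epoch estimate controlling the incremental change in $(Z_2, Z_3, Z_4, W_3)$ with exponentially small deviation probability, and sum these estimates via martingale/concentration arguments for the renewal counts, the exponential Class 2 services, and the Class 4 and Class 6 deterministic services. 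Particular care is needed to control ``stuck'' Class 3 or Class 4 jobs whose service has been repeatedly preempted, and to rule out the rare pathological stack configurations in which the Station IV server spends too much time on Class 4 (depleting the backlog one needs) or too much on old Class 3 jobs (letting $W_3$ escape the $\delta^2 N$ envelope). Converting the per-epoch exponential tails into the target $P(G_N) \ge 1 - C_\delta e^{-c_\delta N}$ then uses $N \ge 2M/\delta$ to produce a number of epochs at least linear in $N$.
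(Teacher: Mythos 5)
Your two-phase decomposition into $[0,S_1]$ and $[S_1,S_1+S_2]$, the choice of $S_1$ as (roughly) the emptying time of Class~2, the choice of $S_2$ as the emptying time of Station~IV, and the overall heuristics for the orders of magnitude $S_1\sim N/\delta$, $S_2\sim \delta^2 N$ all match the paper. However, the central Phase-1 heuristic is flawed in a way that matters. You write that ``each arriving Class 3 job preempts Class 4 service and ties up the server for time $m_3\approx 1$, while a Class 4 service takes only $\delta^3$, so only an order-$\delta^3$ fraction of Class 4 arrivals are actually served.'' Under LIFO there is no intrinsic priority of Class~3 over Class~4: priority is determined solely by arrival time at the station, so a Class~4 job arriving \emph{after} the Class~3 job would immediately preempt it, get served in time $\delta^3$, and depart. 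Taken literally, your heuristic would yield the same conclusion if the Class~4 external arrivals were Poisson --- but in that case the network is known to be stable. The fraction $\delta^3$ is therefore not the service-time ratio $m_4/m_3$; it is of the right order only because of the specific bimodal interarrival law.

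The mechanism the paper actually uses (and which your final paragraph gestures at but does not connect back to the $\delta^3$ claim) is the cluster structure: Class~4 arrivals occur in short bursts of $\sim M$ jobs, taking time $\sim 1/M$, followed by a gap of length $\sim M$ during which every Class~3 arrival is LIFO-senior to the \emph{entire} preceding Class~4 burst. Proposition~\ref{propclass4control} makes this precise by bounding, per cluster $\mathscr{C}_i$, the service time available to Class~4 by $\mathscr{L}_i/M^2 + Y_i$, where $Y_i$ is the time during the following gap in which Class~3 has no post-cluster jobs; the sequence $(Y_i)$ is not i.i.d.\ and is compared (Lemma~\ref{lemmaXandtildeX}) to a modified process with Poisson input at Class~3 to get an i.i.d.\ dominating sequence $(\tilde Y_i)$, and then Lemmas~\ref{lemmadeviations} and~\ref{lemmadrift} give the exponential tails. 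None of this appears in your outline, which instead relies on the incorrect rate heuristic. A related omission: the cluster bound controls only new arrivals at Class~4, so the paper must also control departures of the jobs \emph{initially} at Class~4 ($D_4^o$, Corollary~\ref{cornodepfrom4}), which your outline does not address. Until the cluster estimate is in place, the downstream bounds on $W_3(S_1)$, $Z_4(S_1)$, $Z_1(t)$, $Z_5(S_1)$, $Z_6(S_1)$ and $W_6(T)$ --- which in the paper all feed off Proposition~\ref{propclass4control} --- cannot be established, so the gap is load-bearing.
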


\begin{proof}[\!\!Proof of Theorem \ref{theorem1} assuming Theorem  \ref{propinductionstep}]

Suppose $X(0)$ satisfies the assumptions of Theorem \ref{propinductionstep}
for a given $N$.
Since the evolution of jobs along the upper and lower routes
of the network in Figure \ref{fig1} is symmetric, one can repeatedly 
iterate the theorem by switching the roles of 
Classes 1-3 with those of Classes 4-6, and applying the strong Markov property.
One obtains in this manner a sequence of stopping times $T_0, T_1, T_2, \ldots$,  
with $T_0 = 0$ and
$T_n \in [(N/3\delta)^n, (3N/\delta)^n]$ for $n\ge 1$, such that 
\begin{equation}
\label{eqinductionstepC}
Z (t) \ge (1/4\delta)^n \delta N,  
\quad \text{ for all } t\in [T_{n-1},T_n] \, \text{ and }\,  n\in \mathbb{Z}_+,
\end{equation}
holds on a set $G_N^{\infty}$, with 
\begin{equation}
\label{eqGlimit}
P(G_N^{\infty}) \ge 1- \sum_{n=1}^{\infty} 
C_{\delta} \text{e}^{-c_{\delta}(1/4\delta)^{n}\delta N}.
\end{equation}
The right hand side of (\ref{eqGlimit}) can be made arbitrarily close to 1 by choosing $N$ sufficiently large.  
On $G_N^{\infty}$, one has $\liminf_{t\rightarrow\infty} Z(t)/t > 0$.
So, Theorem \ref{theorem1} will follow by showing, on the set where
$\liminf_{t\rightarrow\infty} Z(t) < \infty$, that
$X(t)$ satisfying 
(\ref{eqtime0-1}) must eventually occur      
 for some $N \ge N_0$ and arbitrarily large $N_0$.

Since $\nu$ has a positive density on $(\gamma M, 2M)$
with $M$  large,
one can check that 
 $B:= \{z = 0 \text{ and } u_1 \le 1/M^2\}$ is accessible, with uniform probability 
by a fixed time, from any state $x \in \mathscr{S}$ with $z \le z_0$ and given $z_0$.
So, off of the set where $\lim_{t\rightarrow \infty} Z(t) = \infty$, $B$ will be revisited 
at arbitrarily large times.

Set $t_1 = 2(2\delta^3 + 1/M^2)N_0$, where $N_0$ is large. We claim that, for $X(0) \in B$,
\begin{equation}
\label{returnto(10)}
P\big(X(t_1) 
\text{ satisfies 
(\ref{eqtime0-1})}, \text{ for some } N\ge N_0 \big) 
\ge \text{e}^{-4N_0/M} \!/  2.
\end{equation}
It follows from (\ref{returnto(10)}) and the previous paragraph that,
off of the set where $\lim_{t\rightarrow \infty} Z(t) = \infty$,
$X(t)$ will eventually satisfy
(\ref{eqtime0-1}) for $N \ge N_0$ and arbitrarily large $N_0$.  This will complete the proof of the theorem upon demonstration of
 (\ref{returnto(10)}).

To demonstrate  (\ref{returnto(10)}), restart $X(t)$ at $x \in B$.
Set $N' = 2N_0$, and denote by $F_{N'}$ the event on which  (a) at Class 1, 
the first  $N'$ non-residual interarrival times are each
$1/M^2$ (``short")  and the next $\lceil 2\delta^3 N'/M \rceil $ interarrival times are all at least
$M$ (``long") and (b) at Class 4, the first $\lceil(2 \delta^3 + 1/M^2)N'/M \rceil$ 
non-residual interarrival times are all
long.  On $F_{N'}$,  only a few jobs enter the network over 
$(0,t_1]$, other than the $N'$ jobs corresponding to short interarrivals.
Since short interarrivals occur with probability $1 - 1/M$, long interarrivals
with probability $1/M$, and $\delta = 1/M^{1/15}$, one can check that
\begin{equation}
\label{boundonF}
 P(F_{N'}) \ge \text{e}^{-2N'/M} = \text{e}^{-4N_0 /M}.
\end{equation}

The service time of jobs at Class 1 is deterministic with
$m_1 = \delta^3 >> 1/M$.  On $F_{N'}$,  there will therefore be almost 
$N'$ jobs at Class 1 at time $N'/M^2$, and few jobs elsewhere in the network.  
The service time of jobs at Class 2 is memoryless with $m_2 = 1 - \delta$. 
After a further 
elapsed time of $2\delta^3 N'$, there will therefore  be, with high
probability, few jobs anywhere
in the network except at Class 2, where there will be $N$ jobs with 
$N \approx N' = 2N_0$.
At Class 3, the immediate workload will be much less than
$\delta^2 N'$.  So, $X(t_1)$ will satisfy the assumptions of 
(\ref{eqtime0-1}) for some $N$, with $N \ge N_0$. 
Together with (\ref{boundonF}), this implies
(\ref{returnto(10)}), which completes the proof of the theorem.

\end{proof}

\section{Basic ideas behind the proof of Theorem \ref{propinductionstep}} 
\label{secintuition}

The proof of Theorem \ref{propinductionstep} employs
reasoning similar in spirit to that used in \cite{LuKu91} and \cite{RySt92}, where the number of jobs in the
network increases proportionately over periodic ``cycles", during which the
uneven distribution of jobs ``starves" stations for work.  The reasoning
is trickier for the LIFO discipline, both because of the basic nature of the discipline, and
because of the possibility that many partially served jobs will accumulate at some
of the classes.  Here, we motivate the network topology in Figure \ref{fig1}, and 
the choice of arrival and service times in (\ref{entrancelaws}) and (\ref{means})
used to demonstrate Theorem \ref{propinductionstep}. 

The main reason for the choice of uneven interarrival times and short service times
 at Class 4 is to in effect create a low priority class there.  
Because of the large gaps in arrivals caused by the rare interarrival times of length at least
$\gamma M$ and the much more common extremely short interarrival times of length
$1/M^2$, overwhelmingly most arrivals are tightly bunched together, with large gaps in
between.  Together with the assumptions on the other classes, this will ensure that most
of the arrivals at Class 3 occur during  these gaps and so, because of the LIFO
discipline, receive a higher priority
of service than do the bunched together arrivals at Class 4.

In order for this picture to hold, one needs the flow of jobs 
to Class 3 to be evenly
spaced. This is 
accomplished by the consistent service of jobs at Class 2:
the service rule there is exponentially distributed and hence memoriless, which
ensures an even flow of jobs to Class 3 as long as Class 2 is not empty.  Moreover,
since the mean service time at Class 3 is only slightly greater than it is at Class 2, work
can only accumulate slowly at Class 3.  (Many mostly served jobs could conceivably
accumulate there.)

The mean service time $m_2 = 1 - \delta$ at Class 2 is only slightly less than the mean time
for jobs to arrive at Class 1. 
So, as long as jobs arriving at Class 1 proceed quickly to Class 2,
 approximately $N/\delta$ jobs will need to be served at Class 2 before Class 2 is first empty,
if $Z_2(0) =N$.  So, the time $S_1$ at which
Class 2 first empties will be approximately $N/\delta$.

By time $S_1$, approximately $N/\delta$ jobs will have entered the network at Class 4.
The jobs
at Class 3 will have
priority over most of the jobs arriving at Class 4; since Class 3 will 
be empty only a small fraction of the time before Class 2 is empty,  few jobs arriving
at Class 4 over $(0,S_1]$ will have completed service by time $S_1$.

As reasoned above, there is comparatively little work remaining at Class 3
at time $S_1$.  There are also
comparatively few jobs at any of the other classes, aside from Class 4:   Since few jobs are
served at Class 4 up until time $S_1$, Class 5 will experience a minimal load over that time
and so will have few jobs at time $S_1$.
For the same reason, few jobs will arrive at Class 6 from Class 5.  
Since $m_1 = \delta^3 << 1$,
jobs in Class 1 require little service.  Consequently, there will
be few jobs at the station that comprises Classes 1 and 6 at time $S_1$.  

To sum up:
At time $S_1$, Class 4 has approximately $N/\delta$ jobs, Class 3 has comparatively little
work remaining, and all other classes have comparatively few jobs.    Moreover, over $[0,S_1]$, there will always be 
at least on
the order of $N$ jobs at either Class 2 or Class 4, and so at least this many jobs in the 
network.  These conclusions are stated in
Theorem \ref{theorembehavioratS}, which summarizes the behavior of the queueing network
up until time $S_1$.

We designate by $T = S_1 + S_2$ the time after $S_1$ at which the station comprising Classes 3
and 4 is first empty.  Because of the relatively little work at time $S_1$ remaining at Class 3, the short
service time $\delta^3$ at Class 4, and the small number of jobs
arriving from elsewhere over $(S_1,T]$, $S_2$ will be of order 
  $\delta^2 N$.  Because of the
relatively short timespan $[S_1,T]$, nearly all of the order of $N/\delta$ jobs arriving 
at Class 5 from Class 4 will still be at Class 5 at time $T$.  This gives the desired lower bound on 
$Z_5 (T)$ in (\ref{eqinductionstepA}) of Theorem \ref{propinductionstep} and
the lower bound in (\ref{eqinductionstepB}) on $Z(t)$, for $t\in (S_1,T]$.
Relatively few jobs will have arrived at Class 1 over $(S_1, T]$,
and so $Z_1(T) +Z_2(T)$ will be sufficiently small for (\ref{eqinductionstepA}).  
By the definition of $S_2$, $Z_3(T) =Z_4(T) = 0$.

The number of jobs 
arriving at Class 6, which is of order of magnitude $\delta^2 N$, will nevertheless be too great for the recursion argument
we wish to employ.  However, $m_6 - m_5 = \delta^3$ is sufficiently small so that the
amount of work $W_6(T)$ that accumulates at Class 6 over the
timespan $\delta^2N$ of $(S_1,T]$ is less than 
$\delta^3 N$, which is
the desired bound on $W_6(T)$ in (\ref{eqinductionstepA}).   With this last bound,
we have thus motivated all of the bounds in (\ref{eqinductionstepA}) and
(\ref{eqinductionstepB}).   This completes our
motivation behind the proof of Theorem \ref{propinductionstep}.

We conclude this section with some
remarks on the choices of service times we have made in (\ref{means})
and on the stability of the LIFO discipline for single class networks.

{\bf Remark 1} $\,$ Classes 2 and 5 are stipulated to have exponentially distributed service times.
The proofs of Theorems \ref{theorem1} and \ref{propinductionstep}  would be
essentially the same if we replaced the deterministic
times of Classes 1, 3, 4, and 6 by exponentially
distributed service times.  However, Corollary \ref{corollary1} would then not follow from 
Theorem \ref{theorem1} since the distributions resulting by adding the service times at
Classes 3.1,...,3.L and 6.1,...,6.L
 would be gamma and not exponentially distributed. 
 
{\bf Remark 2} $\,$ In this paper, we 
consider preemptive LIFO, rather than nonpreemptive
LIFO, where a job currently in service at a class 
completes its service before more recent arrivals
are served.  For nonpreemptive LIFO, Theorem
\ref{theorem1} and its corollary continue to hold under the same
assumptions.  In that setting, one has the option
of replacing the exponentially distributed service
times at Classes 2 and 5 with deterministic times
having the same means, since there can be a most one
partially served job at each of these classes at any
given time and therefore no sudden arrival of
many jobs at Classes 3 and 6.  Similarly, one no longer needs to employ
the immediate workload rather than the number of jobs
for bounds at Classes 3 and 6.

{\bf Remark 3} $\,$Theorem \ref{theorem1} demonstrates the instability
of a family of subcritical multiclass queueing networks with the 
preemptive LIFO discipline.  As stated in Remark 2, an analogous result
 holds for the nonpreemptive LIFO discipline.  
Are subcritical single class queueing networks with either of
these disciplines necessarily stable?  For any nonpreemptive discipline of a single class queueing network, it
is easy to see that the
order of service of jobs at a station does not affect the stability of the queueing network, provided knowledge of their service times is not used.
The FIFO discipline is stable for subcritical single class
queueing networks, assuming the external
arrivals satisfy (\ref{unboundedness}) and
(\ref{abscontinuity}) (see, e.g., \cite{Br08} for
references).  Consequently, so are subcritical single class nonpreemptive LIFO queueing networks.  Conditions under which subcritical single class queueing networks 
with the preemptive LIFO discipline are stable or unstable appear not to be known.

\section{Behavior up until time $S_1$}
\label{sectionuptoS}

We demonstrate Theorem \ref{propinductionstep} by dividing the
time interval $[0,T]$ into two subintervals, $[0,S_1]$ and $[S_1, T]$, where
$S_1$ is the first time at which Class 2 is empty.
The length of $[0,S_1]$ will be of order $N/\delta$, whereas $[S_1,T]$ will be
comparatively short.  Most of the effort in showing
Theorem \ref{propinductionstep} will be in
analyzing the behavior of the queueing network over $[0,S_1]$, which will
be done in this section.  The main result is 
Theorem \ref{theorembehavioratS}, which gives bounds on $Z_k(S_1)$, for
$k\neq 3$, and $W_3(S_1)$, as well as a lower bound on $Z(t)$ over $[0,S_1]$.

\begin{theorem}
\label{theorembehavioratS}
Let $X(t)$ be the Markov process associated with the queueing network
in Figure \ref{fig1} satisfying (\ref{entrancelaws})-(\ref{means}), with
initial conditions satisfying those of 
Theorem \ref{propinductionstep} for some $N\ge 2M/\delta$. 
Then there exists a 
stopping time $S_1$, with 
$S_1  \in [N/ 2\delta, 2N/\delta]$, such that
\begin{align}
\label{behavioratS_1}
& Z_4(S_1) \in [N/3\delta, 3N/\delta], \quad Z_2(S_1) = 0, \\
W_3(S_1) & \le 7\delta^2 N,
\quad Z_k(S_1) \le 7\delta^2 N \quad  \text{ for } k =  1,5,6, \notag
\end{align}
and
\begin{equation}
\label{behaviorattimesbeforeS}
Z (t) \ge N/3,  \quad \text{ for all } t\in [0,S_1],
\end{equation}
all hold on a set $G_{S_1}$ with $P(G_{S_1}) \ge 1- C_{\delta}\text{e}^{-c_{\delta}N}$
for some $C_{\delta},c_{\delta}>0$.
\end{theorem}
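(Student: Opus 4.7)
The plan is to analyze the behavior of the network separately at each of the six classes over $[0,S_1]$, relying on the Law of Large Numbers and Cramér-type exponential concentration inequalities for renewal counting processes, and using the LIFO priority structure at Station IV to control how work is exchanged between Classes 3 and 4. Throughout, I will condition on a ``typical'' event $G_{S_1}$ on which all of the relevant Poisson-like, renewal, and exponential-tail estimates hold with only exponentially small (in $N$) probability of failure, so that these estimates can be combined by a union bound.

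First I would define $S_1=\inf\{t:Z_2(t)=0\}$. Class 1 has deterministic service time $m_1=\delta^3\ll 1$, and the external interarrival distribution $\nu$ has mean $1$; a simple renewal estimate together with the initial bound $Z_1(0)\le\delta N$ then shows that Class~1 is essentially a ``pass-through'' for the arrivals and that, except on an exponentially small event, $Z_1(t)\le 7\delta^2 N$ uniformly on $[0,S_1]$. Consequently, the arrival process to Class~2 is, on $G_{S_1}$, a close approximation of the external arrival process at Class~1, and in particular has rate $\approx 1$. Class~2 is served at rate $1/m_2=1/(1-\delta)$ whenever it is nonempty, so by exponential concentration for the exponential service times and renewal arrivals at Class~1, the net drain rate of $Z_2(t)$ is $\approx \delta$, giving $S_1\in[N/2\delta,\,2N/\delta]$ and $Z_2(S_1)=0$. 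Along the way, $Z_2(t)+Z_4(t)\ge N-O(\delta\cdot t)+\text{(arrivals at 4)}\ge N/3$ since almost every external arrival at Class~4 persists (as shown below), giving the bound $Z(t)\ge N/3$ in (\ref{behaviorattimesbeforeS}).

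The main technical obstacle is controlling Station IV. I would split the interarrival distribution at Class~4 into its \emph{short} ($1/M^2$) and \emph{long} ($\ge \gamma M$) parts, and show via Cramér-type bounds that on $G_{S_1}$, a fraction $1-O(1/M)$ of the arrivals over $[0,S_1]$ are short, clustered into ``bursts'' separated by long gaps, with the total number of long gaps bounded by $O(S_1/M)=O(N/(\delta M))$. The exponential service distribution at Class~2 makes the arrivals to Class~3 essentially a thinning of a uniform flow of rate $1/m_2$, so by standard Poisson-vs-renewal concentration, only an exponentially small fraction of Class~3 arrivals can fall inside the sparse long-gap intervals. Whenever a Class~3 arrival occurs during a burst of short Class~4 interarrivals, LIFO at Station~IV immediately gives the Class~3 job priority, and since $m_3-m_2=\delta^3$ and Class~3 is served a fraction $\approx 1$ of the time over $[0,S_1]$, the immediate workload $W_3$ accumulates at rate $\approx\delta^3$; over time $O(N/\delta)$ this gives $W_3(S_1)\le 7\delta^2 N$, with the constant $7$ absorbing the bad-event slack. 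The hard part is quantifying ``fraction of time Class~3 is served'' under LIFO preemption, since an atypical cluster of long gaps could let Class~4 jobs sneak through; this is where the exponential tail bound on the number and position of long gaps is crucial.

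Given control of Class~3, the remaining bounds follow easily. The number of Class~4 departures over $[0,S_1]$ is bounded by the total service time Class~4 receives, which is $S_1$ minus the time Class~3 is busy; by the previous step, Class~4 receives only $O(\delta^2 N/\delta)=O(\delta N)$ units of service, so at most $O(\delta N/\delta^3)=O(N/\delta^2)$ completions — but in fact the bound needed is weaker: at most a fraction $O(\delta^2)$ of the $\sim N/\delta$ external arrivals at Class~4 depart, giving $Z_4(S_1)\in[N/3\delta,3N/\delta]$. Then $Z_5(t)$ and $Z_6(t)$ are bounded by the small number of Class~4 departures, and their services ($m_5=1-\delta$, $m_6=1-\delta+\delta^3$) easily absorb this load, giving $Z_5(S_1),Z_6(S_1)\le 7\delta^2 N$ by another renewal/concentration argument. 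Assembling all these estimates via a union bound and absorbing all the bad events into $G_{S_1}$ yields the exponential probability bound $P(G_{S_1})\ge 1-C_\delta e^{-c_\delta N}$.
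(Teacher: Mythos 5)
Your proposal captures the correct global structure (define $S_1$ as the first emptying time of Class~2, use drift to estimate $S_1\approx N/\delta$, decompose Class~4 arrivals into clusters separated by long gaps, exploit LIFO at Station~IV to show Class~3 jobs arriving after a burst shut out Class~4, and use $m_3-m_2=\delta^3$ to control the workload growth at Class~3). However, there is a genuine gap in your treatment of Class~1. You assert that ``a simple renewal estimate together with the initial bound $Z_1(0)\le\delta N$'' yields $Z_1(t)\le 7\delta^2 N$ uniformly on $[0,S_1]$, treating Class~1 as a nearly trivial pass-through. This ignores that Class~1 shares Station~I with Class~6, so under LIFO every job that reaches Class~6 can preempt service at Class~1; controlling $Z_1$ therefore requires first controlling the flow into Class~6, which in turn requires the bound on $D_4(S_1)$ from the Station~IV analysis. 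The paper handles this by a careful bootstrap: it first proves the weaker bound $Z_1(t)\le 7\delta N$ (Proposition~\ref{propclass12control}) using only the cruder Corollary~\ref{corclass56control}, uses that to finish the Station~IV analysis, and only then upgrades to $Z_1(t)\le 7\delta^2 N$ on $[N,S_1]$ (Proposition~\ref{propimprovedclass1}) via the sharper Corollary~\ref{cornodepfrom4}. Your proposal collapses this two-stage argument into a single ``elementary'' step that, as written, is circular: you need Class~6 controlled to bound $Z_1$, and you (implicitly) need $Z_1$ controlled to get the drift at Class~2 and hence the time scale over which Class~6 is controlled.

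A secondary, smaller imprecision: your statement that ``whenever a Class~3 arrival occurs during a burst of short Class~4 interarrivals, LIFO at Station~IV immediately gives the Class~3 job priority'' is not quite the mechanism --- during the burst itself, the next $1/M^2$-spaced Class~4 arrival would preempt such a Class~3 job. The correct picture, as in the paper's Lemma~\ref{lemmaXandtildeX} and Proposition~\ref{propclass4control}, is that once the burst has \emph{ended} (after time $U_i+\mathscr{L}_i/M^2$), every subsequent Class~3 arrival has priority over the entire accumulated Class~4 backlog, so Class~4 is served only during the idle periods $Y_i$ of the post-burst Class~3 queue; the quantitative control of $Y_i$ is what Lemma~\ref{lemmadrift} provides. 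Your heuristic would still deliver the right order of magnitude, but as stated it would not support the rigorous bound $D_4(t_0)-D_4^o(t_0)\le\delta^3 t_0$.
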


\subsection{Two large deviations lemmas}
\label{seclemmas}

In this subsection, we state two basic large deviation lemmas. 
The first, Lemma \ref{lemmadeviations}, 
 can be obtained by using the moment generating function and Markov's inequality
(see, e.g., Theorem 15 and Lemma 5,
in Chapter 3 of \cite{Pe75}).  
\begin{lemma}
\label{lemmadeviations}
Let $X_1, X_2,\ldots$ be i.i.d. positive random variables with mean $\mu$ and  
$P(X_1  \ge x) \le  \text{e}^{-\alpha x}$ for $x\ge x_0$ and some $\alpha,x_0 >0$.  Set $S_n = \sum_{i=1}^n X_i$ and $\beta^{\star} = \beta (\beta \wedge 1)$.
Then there exists $c>0$ such that, for all $\beta > 0$,
  \begin{equation}
\label{eqLD}
P(|S_n - \mu n |/n \ge \beta) \le \text{e}^{-c \beta^{\star}n} \qquad \text{ for all } n\ge 0.
\end{equation}

\end{lemma}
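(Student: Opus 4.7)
The plan is to apply the Chernoff--Markov bound to each tail and optimize the dual parameter, with the two regimes of $\beta^{\star} = \beta(\beta \wedge 1)$ arising from a split between small and large deviations. The hypothesis $P(X_1 \ge x) \le e^{-\alpha x}$ for $x \ge x_0$ ensures that the moment generating function $M(\theta) := E[e^{\theta X_1}]$ is finite for $\theta$ in some open interval $(-\infty, \theta_0)$ with $\theta_0 > 0$ (positivity of $X_1$ gives automatic finiteness for $\theta \le 0$). A standard dominated-convergence argument using the exponential tail then shows that $\psi(\theta) := \log M(\theta) - \theta\mu$ is $C^2$ on a closed neighborhood $[-\theta_1, \theta_1]$ of $0$, with $\psi(0) = \psi'(0) = 0$ and $|\psi''| \le C$ there, yielding the quadratic estimate $\psi(\theta) \le C\theta^2/2$ on this interval.

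For the upper tail $P(S_n - \mu n \ge \beta n)$, Markov's inequality applied to $e^{\theta(S_n - \mu n)}$ with $\theta > 0$ produces the bound $\exp(n[\psi(\theta) - \theta\beta])$. Choosing $\theta = \beta/C$ when $\beta \le C\theta_1$ yields $\exp(-\beta^2 n/(2C))$, handling the small-$\beta$ regime; for $\beta > C\theta_1$, fixing $\theta = \theta_1$ gives a linear bound $\exp(-c\beta n)$ for an appropriate $c > 0$. The two cases combine to $\exp(-c\beta^{\star} n)$ after shrinking $c$ once. For the lower tail, the case $\beta \ge \mu$ is vacuous because $S_n \ge 0$, while for $\beta < \mu$ the symmetric argument with $\theta \in [-\theta_1, 0]$ works identically, exploiting that $M(\theta) \le 1$ automatically for $\theta \le 0$ by positivity of $X_1$.

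The main obstacle is essentially bookkeeping rather than conceptual: verifying uniform boundedness of $\psi''$ near zero (routine via dominated convergence, with the exponential tail dominating $x^2 e^{\theta x}$ for $\theta < \alpha$) and matching the two regimes smoothly at the transition $\beta \sim 1$ to recover the precise scaling $\beta(\beta \wedge 1)$. Because these manipulations are entirely standard, the authors are justified in appealing directly to Theorem~15 and Lemma~5 of Chapter~3 of \cite{Pe75} instead of reproving the estimate.
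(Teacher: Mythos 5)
Your Chernoff--Markov argument, splitting into a quadratic regime for $\beta \le 1$ and a linear regime for $\beta > 1$ and handling the two tails by tilting with opposite signs, is exactly the moment-generating-function route the paper points to via Petrov, so this matches the paper's (cited, not written out) approach. The one loose thread is that union-bounding the two tails leaves an extra factor of $2$ that cannot always be absorbed into the exponent when $n\beta^{\star}$ is small --- strictly the bound should carry a multiplicative prefactor, as the paper's own subsequent displays (\ref{invertedLD}) and (\ref{LDforallt}) do --- but this is an imprecision in the lemma's statement rather than a flaw in your reasoning.
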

Setting $N_t = \max \{n: S_n \le t \}$, 
 one obtains by inverting (\ref{eqLD}) and a bit of calculation: 
\begin{equation}
\label{invertedLD}
P(|N_t - \mu^{-1}t |/t \ge \beta) 
\le C\text{e}^{-c \beta^{\star} t} 
\qquad \text{ for all } t\ge 0,
\end{equation}
for appropriate $C,c>0$
not depending on $\beta$.
This  bound will be applied repeatedly in the section to $D_2(t)$ and $A_4(t)$,
as well as to other departure and arrival times. 
 Summing (\ref{invertedLD}) over $t = t_0, t_0 +1, \ldots$ and interpolating
in between, one  obtains that, for appropriate $C>0$ and each $t_0 \ge 0$,
\begin{equation}
\label{LDforallt}
 P(|N_t - \mu^{-1}t |/t \ge \beta \,\, \text{ for any } t\ge t_0 ) 
\le C (\beta^{\star}\wedge 1)^{-1} \text{e}^{-c \beta^{\star} t_0}. 
\end{equation}
(The explicit dependence on $\beta$ in the bounds (\ref{eqLD})-(\ref{LDforallt}) will only be used in Lemma \ref{lemmadrift} and Proposition \ref{propclass4control}. Bounds in other
applications will be of the form 
$C_{\delta} \text{e}^{-c_{\delta}t}$, where the relationship with $\beta$ is 
suppressed.)

%
%

For the next lemma, 
consider a queue at which jobs arrive according to a rate-($1+ \eta$)
Poisson process and each job requires 1 unit of time to be served
before exiting the queue.  Let $W_t$ denote the immediate workload at time $t$,
that is, the amount of time required for all jobs then at the queue to
be served, provided no other jobs arrive.  The lemma will be used
in Propositions \ref{propclass4control} and \ref{propclass34control} to
obtain bounds
on the state at Classes 3 and 4, until Class 2 is
empty.

\begin{lemma}
\label{lemmadrift}
Define $W_t$ as above, with $W_0 = 0$, and set $B = \{t\ge 0 : W_t = 0 \}$.  
 Then, for $\eta \in (0,1]$ and appropriate $C,c > 0$,
\begin{equation}
\label{driftbound}
P(|B| \ge x) \le (C/\eta^2) \text{e}^{-c\eta^2 x} \quad  \text{for all } x\ge 0, 
\end{equation}
and
\begin{equation}
\label{LDforWestimate}
P(W_t \ge 2\eta t \text{ for some } t\ge t_0) \le (C/\eta^2) \text{e}^{-c\eta^3 t_0}
\quad \text{for all } t_0  \ge 0.
\end{equation}
\end{lemma}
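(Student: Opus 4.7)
The plan is to prove the two bounds separately, with the first feeding into the second. For (\ref{driftbound}) I will exploit the regenerative structure of the supercritical queue to show that $|B|$ is exactly exponentially distributed, with rate proportional to $\eta$. For (\ref{LDforWestimate}) I will use the basic workload identity $W_t = N_t - (t - L_t)$, where $N_t$ is the Poisson arrival count up to time $t$ and $L_t := |B\cap [0,t]|$ is the cumulative idle time, and bound the two deviation pieces $N_t-(1+\eta)t$ and $L_t-\eta t$ separately, the first via (\ref{LDforallt}) and the second via the first part of the lemma.

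For (\ref{driftbound}), let $q$ denote the probability that a busy period starting from a single customer terminates in finite time. Conditioning on the $\text{Poisson}(1+\eta)$ number of arrivals during the initial unit-length service, each of which initiates an independent busy sub-period, gives the branching identity $q = e^{-(1+\eta)(1-q)}$. Writing $p := 1-q$, so that $(1+\eta)p = -\ln(1-p)$, and using the elementary bound $-\ln(1-p) \le p/(1-p)$ for $p \in (0,1)$, one obtains $p \ge \eta/(1+\eta) \ge \eta/2$ uniformly in $\eta \in (0,1]$. By the strong Markov property at the successive returns of $W$ to $0$, the idle periods $I_1, I_2, \ldots$ of the queue are i.i.d.\ $\text{Exp}(1+\eta)$, and the total number $G$ of them (the index of the first infinite busy period) is geometric with $P(G \ge k) = q^{k-1}$. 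The classical identity that a geometric-$p$ sum of $\text{Exp}(\lambda)$ variables is itself $\text{Exp}(\lambda p)$ then yields $|B| = \sum_{i=1}^G I_i \sim \text{Exp}((1+\eta)p)$, so $P(|B|\ge x) \le e^{-\eta x}$ for all $x \ge 0$. This is stronger than (\ref{driftbound}), which follows at once since $\eta \ge \eta^2$ on $(0,1]$.

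For (\ref{LDforWestimate}), the workload identity rearranges to $W_t - 2\eta t = (N_t - (1+\eta)t) + (L_t - \eta t)$, so the event $\{W_t \ge 2\eta t\}$ is contained in $\{N_t-(1+\eta)t \ge \eta t/2\} \cup \{L_t \ge \eta t/2\}$. Taking unions over $t \ge t_0$: for the first, applying (\ref{LDforallt}) to the Poisson arrival process with $\beta = \eta/2$ (so $\beta^\star = \eta^2/4$) gives a bound of the form $(C/\eta^2)\,e^{-c\eta^2 t_0}$. For the second, $L_t$ is nondecreasing in $t$ with $L_t \le |B|$, so $\{L_t \ge \eta t/2 \text{ for some } t \ge t_0\} \subseteq \{|B| \ge \eta t_0/2\}$, and (\ref{driftbound}) with $x = \eta t_0/2$ gives $(C/\eta^2)\,e^{-c\eta^3 t_0}$. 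Summing the two contributions, the $\eta^3$ term dominates and produces (\ref{LDforWestimate}).

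The only step that requires real care is the extinction-probability estimate $p \ge c\eta$ with $c$ independent of $\eta \in (0,1]$, as this is where the $\eta$-dependence of the decay rate is determined; everything else is routine bookkeeping combining the geometric-exponential identity with the large-deviation facts already given in Lemma \ref{lemmadeviations}.
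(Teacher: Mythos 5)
Your proof is correct, and for the bound on $|B|$ it takes a genuinely different route from the paper. For (\ref{driftbound}), the paper couples $W_t$ with the free process $X_t = Y_t - t$ (so $W_t \ge X_t$ and $B \subseteq B^X$) and bounds $|B^X|$ by integrating the pointwise tail estimate $P(X_t \le 0)$ from (\ref{invertedLD}) over $t \ge x-1$, using that $|B^X|\ge x$ forces at least one unit of time with $X_t\le 0$ on $[x-1,\infty)$. You instead read off the exact law of $|B|$ from the regenerative structure: with $W_0=0$ the idle periods are i.i.d.\ $\mathrm{Exp}(1+\eta)$, their number is geometric with parameter $p$ equal to the survival probability of the branching busy period, the implicit equation $(1+\eta)p = -\ln(1-p)$ together with $-\ln(1-p)\le p/(1-p)$ gives $p\ge \eta/(1+\eta)$, and the geometric-sum-of-exponentials identity yields $|B|\sim \mathrm{Exp}((1+\eta)p)$, hence $P(|B|\ge x)\le e^{-\eta x}$. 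This is cleaner and strictly sharper than (\ref{driftbound}) (better exponent, no $C/\eta^2$ prefactor), though of course it relies on the Poisson/exponential structure rather than only on the large-deviations input. For (\ref{LDforWestimate}) the two proofs are essentially identical: your identity $W_t = (N_t - t) + L_t$ is precisely the paper's $W_t = X_t + |B_t|$, and both split $\{W_t\ge 2\eta t\}$ into a large-deviations piece for the free process (handled by (\ref{LDforallt}), giving $e^{-c\eta^2 t_0}$) and a cumulative-idle-time piece (handled by (\ref{driftbound}) with $x$ proportional to $\eta t_0$, which is where the $\eta^3$ in the exponent comes from). One minor remark: since you have the stronger bound $e^{-\eta x}$, your second estimate could actually be improved to $e^{-c\eta^2 t_0}$, but stating it in the weaker form that matches the lemma is entirely appropriate.
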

\begin{proof}
To obtain  (\ref{driftbound}) and (\ref{LDforWestimate}), we compare the
process $W_t$ with  
$X_t = Y_t - t$, where $Y_t$ is a rate-($1+ \eta$)
Poisson process and
$X_0 = 0$.  Set $B^X = \{t\ge 0 : X_t \le 0 \}$.   Coupling the processes
$W_t$ and $X_t$ by allowing the same random input for each, 
$W_t \ge X_t$ for all $t$.  Therefore, (\ref{driftbound}) will follow by
showing its analog,
\begin{equation}
\label{driftboundsimple}
P(|B^
X| \ge x) \le (C/\eta^2)\text{e}^{-c\eta^2 x}   \quad  \text{for } x\ge 0,
\end{equation}
for appropriate $C,c > 0$.  Setting  
$B_t = \{s\in [0,t]: W_s = 0 \}$, one has $W_t - X_t = |B_t| \le |B|$.  
Plugging this into (\ref{driftbound}), one can check that
 (\ref{LDforWestimate}) will follow from
\begin{equation}
\label{LDforWestimatesimple}
P(X_t \ge 2\eta t \text{ for some } t\ge t_0) \le (C/\eta^2)\text{e}^{-c\eta^2 t_0} ,
\end{equation}
for appropriate $C,c > 0$.

The bound in (\ref{LDforWestimatesimple}) follows directly from
(\ref{LDforallt}).
To show (\ref{driftboundsimple}), note that, by integrating (\ref{invertedLD}), 
\begin{equation}
\label{LDuupperX_t}
\int_{t_0}^{\infty}P(X_t \le 0)dt \le (C/c\eta^2) \text{e}^{-c\eta^2 t_0},
\end{equation}
with $C,c>0$ not depending on $\eta$ for $\eta \in (0,1]$.  
On $|B^X| \ge x$, $X_t \le 0$ must occur for at least 1 unit of time on $t\ge x-1$,
so 
\begin{equation*}
P(|B^X| \ge x) \le \int_{x-1}^{\infty}P(X_t \le 0)dt \le 
(C'/c\eta^2) \text{e}^{-c\eta^2 x},
\end{equation*}
for appropriate $C'>0$, which implies (\ref{driftboundsimple}) for a new choice of $C$.
 
\end{proof}

\subsection{Demonstration of Theorem \ref{theorembehavioratS}}  In order to demonstrate
Theorem \ref{theorembehavioratS}, we employ six propositions on the evolution
of the queueing network that is due to the continuing service of jobs at Class 2 over
$[0,S_1]$.  An outline of the reasoning is given in Section \ref{secintuition}.

For Proposition \ref{propclass4control}, we will employ Lemmas
\ref{lemmaclusters} and \ref{lemmaXandtildeX}.
We begin by observing that the interarrival 
times at Class 4 
are either very long or very short. 
This allows us to
decompose the sequence of arrivals into \emph{clusters}, with an individual cluster
$\mathscr{C}$ consisting of
a finite sequence of jobs where the interarrival times between members of
the cluster
are each of length $1/M^2$, and these jobs are preceded and followed by 
interarrival times of length $t\ge \gamma M$. 
(The first arrival after time $0$ is assumed to begin a cluster.)
We denote successive clusters by
$\mathscr{C}_i$, $i\in \mathbb{Z}_+$ (see Figure \ref{fig3}).  Since $\gamma \sim 1$, 
the following lemma is immediate.

\begin{figure}
\centering
\begin{tikzpicture}[thick,scale=0.90, every node/.style={scale=0.85}]
\draw[thick] (-4.6,0) -- (7.7,0); 
\draw[thick] (-4.6,-.3) -- (-4.6,.3); 
\draw[thick] (6.9,-.3) -- (6.9,.3); 

\filldraw (-5.15,-.6)  node [anchor=west] {$t=0$};
\filldraw (6.35,-.6)  node [anchor=west] {$t=t_0$};
\filldraw (0.66,-1.0)  node [anchor=west] {4 clusters};

\filldraw (-3.10,1.2)  node [anchor=west] {$= 1/M^2$};
\filldraw (1.15,1.2)  node [anchor=west] {$\sim M$};
\filldraw (4.40,1.2)  node [anchor=west] {$= 1/M^2$};

\draw[thick, ->] (0.55,-1) -- (-3.6,-.30); 
\draw[thick, ->] (0.55,-.8) -- (-0.4, -.30); 
\draw[thick, ->] (2.5,-.8) -- (3.2,.-.30); 
\draw[thick, ->] (2.5,-1) -- (6.20,-.30); 

\draw[thick, ->] (-2.75,.8) -- (-4.35,.15); 
\draw[thick, ->] (-2.45,.8) -- (-3.59,.15); 
\draw[thick, ->] (-2.10,.8) -- (-1.35,.15); 
\draw[thick, ->] (-1.80,.8) -- (-.31,.15); 

\draw[thick, ->] (4.75,.8) -- (2.91,.15); 
\draw[thick, ->] (5.05,.8) -- (3.91,.15); 
\draw[thick, ->] (5.40,.8) -- (6.21,.15); 
\draw[thick, ->] (5.65,.8) -- (6.69,.15); 

\draw [decorate,decoration={brace,amplitude=7pt},xshift=-4pt,yshift=0pt]
(0.85,0.5) -- (2.6, 0.5) node [black,midway,xshift= 0.6cm]
{\footnotesize };

\fill (-4.45,0) circle (0.08);
\fill (-4.20,0) circle (0.08);
\fill (-3.95,0) circle(0.08);
\fill (-3.70,0) circle (0.08);
\fill (-3.45,0) circle(0.08);

\fill (-1.45,0) circle (0.08);
\fill (-1.20,0) circle (0.08);
\fill (-.95,0) circle(0.08);
\fill (-.70,0) circle (0.08);
\fill (-.45,0) circle(0.08);
\fill (-.20,0) circle (0.08);
\fill (.05,0) circle(0.08);
\fill (.30,0) circle (0.08);
\fill (.55,0) circle(0.08);

\fill (2.55,0) circle (0.08);
\fill (2.80,0) circle (0.08);
\fill (3.05,0) circle (0.08);
\fill (3.30,0) circle (0.08);
\fill (3.55,0) circle(0.08);
\fill (3.80,0) circle (0.08);
\fill (4.05,0) circle(0.08);

\fill (6.05,0) circle (0.08);
\fill (6.30,0) circle (0.08);
\fill (6.55,0) circle (0.08);
\fill (6.80,0) circle (0.08);
\fill (7.05,0) circle(0.08);
\fill (7.30,0) circle (0.08);

\end{tikzpicture}
\caption{A realization depicting the arrival times of jobs at either Class 1
or at Class 4, with dots indicating these arrival times.
In this case, there are four clusters $\mathscr{C}_i$
overlapping $[0,t_0]$.  The distance between jobs within a cluster is
always $1/M^2$; the distance between clusters is random and approximately
$M$ (since  $\gamma \sim 1$).
}
\label{fig3}
\end{figure}
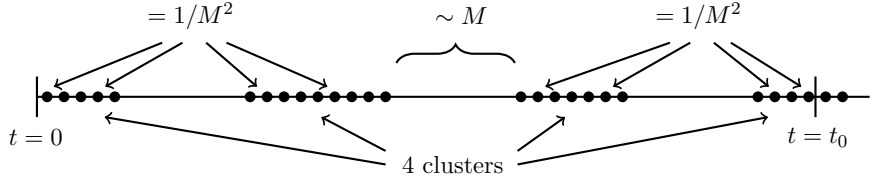

\begin{lemma}
\label{lemmaclusters}
The number of clusters at Class 4 overlapping the time interval $(0,t_0]$ is at most $\lceil 2t_0/M \rceil$ for any $t_0 >0$.
\end{lemma}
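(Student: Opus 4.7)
The plan is to exploit the key structural fact built into the definition of a cluster: any two consecutive clusters $\mathscr{C}_i$ and $\mathscr{C}_{i+1}$ are, by definition, separated by a single ``long'' interarrival time of length at least $\gamma M$. Letting $s_i$ denote the arrival time of the first job of cluster $\mathscr{C}_i$, this immediately yields the spacing bound $s_{i+1} - s_i \ge \gamma M$ for every $i$, since the stretch $[s_i, s_{i+1})$ contains the short interarrivals interior to $\mathscr{C}_i$ (which only add to the gap) plus the long interarrival that terminates $\mathscr{C}_i$.

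First I would identify precisely which clusters overlap $(0, t_0]$. Every arrival time in $\mathscr{C}_i$ lies in $[s_i, \infty)$, so if $s_i > t_0$ then $\mathscr{C}_i$ contributes no arrival in $(0, t_0]$; thus an overlapping cluster must satisfy $s_i \le t_0$. In the other direction, the convention that the first arrival after time $0$ begins $\mathscr{C}_1$ gives $s_i > 0$ for all $i \ge 1$. Hence the clusters overlapping $(0, t_0]$ are exactly those with $s_i \in (0, t_0]$.

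Next, if $n$ such clusters exist, telescoping the spacing bound gives $s_n - s_1 \ge (n-1)\gamma M$, and combining this with $s_1 > 0$ and $s_n \le t_0$ yields $(n-1)\gamma M \le t_0$, i.e. $n \le 1 + t_0/(\gamma M)$. Since $\gamma \sim 1$ for large $M$, in particular $\gamma > 1/2$, this gives the strict bound $n < 1 + 2t_0/M$. A one-line case check (treating $2t_0/M$ integer or not) then upgrades this to $n \le \lceil 2t_0/M \rceil$, as required.

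There is essentially no obstacle here; this is a deterministic counting argument and uses no probability beyond the definition of $\nu$ in (\ref{entrancelaws}), which forces the dichotomy ``short $= 1/M^2$ or long $\ge \gamma M$'' on every interarrival and so legitimizes the cluster decomposition. The only mild care needed is in the final rounding step and in handling the residual interarrival at time $0$, both of which are absorbed into the $+1$ and the ceiling on the right-hand side.
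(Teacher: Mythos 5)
Your proof is correct and is exactly the argument the paper has in mind: the paper does not actually write out a proof, remarking only that the lemma ``is immediate'' since $\gamma\sim 1$, and your telescoping bound on the spacing $s_{i+1}-s_i\ge\gamma M$ between consecutive cluster starting times is precisely what makes it immediate. The deterministic counting, the observation that overlapping clusters are those with $s_i\in(0,t_0]$, and the use of $\gamma>1/2$ followed by the ceiling case-check are all sound and fill in the details the paper leaves to the reader.
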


We denote by $\mathscr{L}_i$ the number of jobs in
$\mathscr{C}_i$ and by $U_i$ the time of arrival of the first job of 
this cluster.  
Also denote by $Y_i$ the amount of time on the interval
$(U_i + \mathscr{L}_i/M^2, U_{i+1}] \cap (0,t_0]$ 
that Class 3 does not have any jobs that arrived 
after $U_i + \mathscr{L}_i/M^2$.  Over
$(U_i + \mathscr{L}_i/M^2, U_{i+1}]$, 
all Class 3 jobs arriving after $U_i + \mathscr{L}_i/M^2$
will have priority over the jobs in Class 4 because of the LIFO rule.

We also introduce $\tilde{Y}_i$,
which is defined in the same way as $Y_i$, but for a \emph{modified process}  
where the arrival stream of jobs at
Class 3 is now given by a rate-$(1-\delta)^{-1}$ Poisson process instead of by actual
departures from Class 2, and the time interval 
$(U_i + \mathscr{L}_i/M^2, U_{i+1}]$ corresponding to 
$\tilde{Y}_i$  is not intersected by $(0,t_0]$.  
One can couple the original and modified processes so
that an arrival at Class 3 for the modified process always
occurs whenever an arrival for the original process occurs. 
The sequence  $(Y_i)_{i\in \mathbb{Z}_+}$ is not i.i.d.  However, setting
\begin{equation}
\label{defofA}
A_{t_0} = \{Z_2(t) > 0 \, \text{ for all } t\in [0,t_0) \},
\end{equation}
it is easy to
check the following:
\begin{lemma}
\label{lemmaXandtildeX}
The sequence $(\tilde{Y}_i)_{i\in \mathbb{Z}_+}$ is i.i.d.
For $\omega \in A_{t_0}$ and $U_{i+1} \le t_0$, $Y_i = \tilde{Y}_i$; for 
$\omega \in A_{t_0}$ and all $i$,
$Y_i \le \tilde{Y}_i$.
\end{lemma}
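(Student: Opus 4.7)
The argument is essentially an unpacking of the coupling together with a structural observation about LIFO at Class 3. First I would record the key observation: under LIFO with deterministic service time $m_3$ at Class 3, the indicator ``at time $t\in(U_i+\mathscr{L}_i/M^2,U_{i+1}]$, Class 3 contains no job that arrived after $U_i+\mathscr{L}_i/M^2$'' is a deterministic function of the Class 3 arrivals in $(U_i+\mathscr{L}_i/M^2,t]$ alone. Indeed, each new arrival preempts the job currently in service, and because service times are deterministic of length $m_3$, the collection of ``new'' jobs evolves exactly as in a Class 3 system started empty at $U_i+\mathscr{L}_i/M^2$; older jobs present at that time play no role in determining when the last new job clears. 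Consequently both $Y_i$ and $\tilde Y_i$ are measurable functions solely of the Class 3 arrival stream restricted to the relevant interval.

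For the i.i.d.\ claim, I would note that the interval $(U_i+\mathscr{L}_i/M^2,U_{i+1}]$ has length $T_i-1/M^2$, where $T_i$ is the long Class 4 interarrival time following cluster $\mathscr{C}_i$. The $T_i$ are i.i.d.\ draws from the continuous part of $\nu$, and they are independent of the modified rate-$(1-\delta)^{-1}$ Poisson arrival stream at Class 3 (which is put in by fiat as an independent process). Since the intervals corresponding to different $i$ are disjoint and a Poisson process has independent increments on disjoint sets, the pairs (interval length, Poisson points in the interval) are i.i.d.\ across $i$, so the same is true of their common image $\tilde Y_i$.

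For the remaining two claims, I would use the coupling of the two Class 3 arrival streams hinted at in the paragraph preceding the lemma: during every busy period of Class 2 the actual departure stream from Class 2 is itself a rate-$(1-\delta)^{-1}$ Poisson process by memorylessness, so take the modified stream to agree with the original on such periods and to be an independent rate-$(1-\delta)^{-1}$ Poisson stream during Class 2's idle periods. On $A_{t_0}$, Class 2 is busy throughout $[0,t_0)$, hence the two streams coincide on $[0,t_0]$. When $U_{i+1}\le t_0$, the whole interval $(U_i+\mathscr{L}_i/M^2,U_{i+1}]$ lies in $(0,t_0]$, the arrival streams restricted to it agree, and the first paragraph yields $Y_i=\tilde Y_i$. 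In the general case, the interval for $Y_i$ is the intersection of the interval for $\tilde Y_i$ with $(0,t_0]$: on that intersection the ``no new jobs'' indicators of the two processes agree because the arrivals do, while the complement can only contribute a nonnegative further amount to $\tilde Y_i$, giving $Y_i\le\tilde Y_i$. I do not anticipate any serious obstacle; the only point that calls for a careful word is the LIFO-with-deterministic-service observation of the first paragraph that decouples the new-job indicator from the pre-$(U_i+\mathscr{L}_i/M^2)$ history of Class 3.
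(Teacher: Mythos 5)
The paper offers no proof for this lemma — it is dismissed as "easy to check" — so there is no paper argument to compare against. Your unpacking is correct: the three points you isolate (the LIFO observation that the new-job indicator at Class 3 on $(U_i+\mathscr{L}_i/M^2,U_{i+1}]$ is determined by the new Class~3 arrivals alone, since no new Class~4 job can arrive there; the i.i.d.\ structure coming from disjoint intervals of i.i.d.\ lengths intersected with an independent stationary Poisson stream; and the monotonicity $Y_i\le\tilde Y_i$ because $Y_i$ measures the same indicator over a sub-interval of the $\tilde Y_i$-interval on which the two arrival streams agree on $A_{t_0}$) are exactly what is needed. One place where it pays to be careful is in the coupling: your phrasing ("take the modified stream to agree with the original on busy periods and to be an independent Poisson stream on idle periods") makes it a little delicate to see that the resulting process is a Poisson process independent of the Class~4 interarrival primitives, which the i.i.d.\ claim requires. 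The cleaner statement of the same idea is to take the modified stream to \emph{be} an a priori independent rate-$(1-\delta)^{-1}$ Poisson process $\mathcal{P}$, independent of all other primitives, and to realize the Class~2 departures as the points of $\mathcal{P}$ falling in Class~2's busy periods (valid by memorylessness); then independence from the Class~4 interarrivals, and hence the i.i.d.\ claim, is immediate, and the agreement on busy periods is automatic. With that reformulation your argument goes through without further comment.
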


Proposition \ref{propclass4control} provides an upper bound on the number of jobs
that can depart from Class 4 on the event $A_{t_0}$.  The bound is due
to ``most"  arriving jobs at Class 3 having higher priority than ``most" jobs at Class 4, and Class 3 seldom being empty.  This reasoning will employ the relatively low number of clusters of
jobs at Class 4 together with there being ``few" jobs in each cluster that can have higher priority than the
jobs in Class 3.  In the proposition, the choice of the term $\delta^3$ 
is somewhat arbitrary -- we require at least this power, but
$\delta^n$ could instead be used if we defined $\delta = M^{1/(n+12)}$.

We denote by $D_4^o(t)$ 
the number of jobs originally in Class 4 that have departed
the class by time $t$

\begin{proposition}
\label{propclass4control}
Let $X(t)$ be the Markov process associated with the queueing network
in Figure \ref{fig1} satisfying (\ref{entrancelaws})-(\ref{means}), with
any initial condition. 
Then, for appropriate 
$C_{\delta},c_{\delta} >0$,
\begin{equation}
\label{class4control}
P(D_4(t_0) \ge \delta^3 t_0 + D_4^o (t_0) \, ; A_{t_0}) \le C_{\delta}\text{e}^{-c_{\delta} t_0},
\end{equation}
where $A_{t_0}$ is as in (\ref{defofA}).
\end{proposition}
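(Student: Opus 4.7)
The plan is to bound the total Class 4 service time devoted to non-original jobs in $(0,t_0]$, since each completion consumes $\delta^3$ units of work. My first step is the following LIFO bookkeeping claim: on each post-arrival window $(U_i + \mathscr{L}_i/M^2, U_{i+1}] \cap (0,t_0]$, the time Class 4 spends serving a non-original job is at most $Y_i$. To see this, note that no Class 4 arrival after $\mathscr{C}_i$ occurs in this window, so any non-original Class 4 job $J$ being served must belong to some cluster $\mathscr{C}_j$ with $j \le i$, with arrival time $\tau_J \le U_j + \mathscr{L}_j/M^2 \le U_i + \mathscr{L}_i/M^2$. Under LIFO, $J$ is served only if no Station IV job arrived after $\tau_J$ is present, which in particular requires the absence of any Class 3 job arrived after $U_i + \mathscr{L}_i/M^2$---precisely the condition defining $Y_i$. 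Inside each cluster arrival span $(U_i, U_i + \mathscr{L}_i/M^2]$, successive arrivals at spacing $1/M^2 \ll \delta^3$ preempt one another, so no cluster job accumulates the $\delta^3$ needed for a completion in typical spans (atypically long clusters are handled separately via large deviations on $\mathscr{L}_i$). Combined with Lemma \ref{lemmaclusters},
\begin{equation*}
D_4(t_0) - D_4^o(t_0) \;\le\; \sum_i Y_i/\delta^3 \;+\; \lceil 2t_0/M \rceil \;+\; R,
\end{equation*}
where the sum is over the at most $\lceil 2t_0/M\rceil$ clusters overlapping $(0,t_0]$, and $R$ is bounded by $\delta^3 t_0/4$ except on a set of probability $\le C_\delta e^{-c_\delta t_0}$. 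The cluster-count term is at most $\delta^3 t_0/4$ deterministically for large $M$, so the task reduces to showing $\sum_i Y_i \le \delta^6 t_0/2$ on $A_{t_0}$ with the required exponential probability.

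Next, Lemma \ref{lemmaXandtildeX} gives $Y_i \le \tilde Y_i$ on $A_{t_0}$, with the $\tilde Y_i$ i.i.d. After rescaling time by $m_3 = 1-\delta+\delta^3$, each $\tilde Y_i$ is the idle time of a unit-service M/D/1 queue with arrival rate $1+\eta$ and $\eta = \delta^3/(1-\delta) \asymp \delta^3$, over an interval of rescaled length $\Theta(M)$. Lemma \ref{lemmadrift} then yields $P(\tilde Y_i \ge x) \le (C/\delta^6)\,e^{-c\delta^6 x}$, so $E[\tilde Y_i] = O(\delta^{-3})$ and $E\big[\sum_i \tilde Y_i\big] = O(t_0/(M\delta^3))$. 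Since the target $\delta^6 t_0$ exceeds this mean by a factor $M\delta^9 = M^{2/5} \to \infty$, a standard Chernoff bound for the i.i.d.\ sum (applying Lemma \ref{lemmadeviations} to the $\tilde Y_i$ after truncating at a level such as $\delta^{-6}\log(1/\delta)$ and using the tail in Lemma \ref{lemmadrift} for the overshoot) yields $P(\sum_i \tilde Y_i \ge \delta^6 t_0/2) \le C_\delta e^{-c_\delta t_0}$, proving (\ref{class4control}).

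The main obstacle is the LIFO bookkeeping in the first step: one has to rule out that an arbitrary interleaving of cluster arrivals, bursty Class 2 output into Class 3, and many partially served jobs thwarts the clean priority picture. The essential observation is that serving any non-original Class 4 job in cluster $i$'s post-arrival window forces absence of Class 3 arrivals after $U_i + \mathscr{L}_i/M^2$ (not merely after $J$'s own arrival time $\tau_J$), which lets one bound the Class 4 service time in that window uniformly by $Y_i$ regardless of which older cluster $\mathscr{C}_j$ is currently at the top of the LIFO stack. Once this is in place, the remaining steps---stochastic domination, rescaling to a slightly overloaded M/D/1 queue, and a Chernoff estimate---are routine given the comfortable slack $M\delta^9 \to \infty$ baked into the parameters.
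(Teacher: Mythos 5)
Your proposal is correct and follows essentially the same route as the paper: decompose the Class 4 arrival stream into clusters, observe that any non-original Class 4 job served after the cluster has finished arriving requires the absence of Class 3 jobs that arrived after $U_i + \mathscr{L}_i/M^2$ (giving the $Y_i$ bound), dominate by the i.i.d.\ $\tilde Y_i$ via Lemma \ref{lemmaXandtildeX}, obtain tails on $\tilde Y_i$ from (\ref{driftbound}) of Lemma \ref{lemmadrift}, and close with a large-deviations estimate on the sum. Your LIFO bookkeeping (that serving $J\in\mathscr{C}_j$, $j\le i$, in the post-arrival window forces absence of all Class 3 jobs that arrived after $U_i + \mathscr{L}_i/M^2$) is exactly the observation the paper uses, spelled out more explicitly.

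Two minor points where you diverge from, and arguably over-complicate relative to, the paper's argument. First, the paper does not argue about preemption within the cluster arrival span at all: it simply bounds the total Class 4 service time over $(U_i, U_{i+1}]$ by $\mathscr{L}_i/M^2 + Y_i$ (span length plus $Y_i$) and divides by $\delta^3$; this makes your preemption-within-span argument, and the associated $\lceil 2t_0/M\rceil + R$ bookkeeping, unnecessary. Note also that the conclusion of your preemption argument is that no cluster job \emph{finishes} during a typical span, which does not directly translate into the $\lceil 2t_0/M\rceil$ term you wrote down; the paper's crude $\mathscr{L}_i/M^2$ bound avoids this issue entirely. Second, your assertion $E[\tilde Y_i] = O(\delta^{-3})$ does not follow from the tail bound $P(\tilde Y_i\ge x)\le (C/\delta^6)e^{-c\delta^6 x}$, which only gives $O(\delta^{-6}\log(1/\delta))$; this is inconsequential since even that larger value leaves a diverging slack against the target $M\delta^6/4$ per cluster, and your Chernoff step (or the paper's explicit domination by $V_i = (2/c\delta^9)(R_i + \log[2C/\delta^6])$ with exponential $R_i$) goes through either way. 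Finally, your direct application of Lemma \ref{lemmadeviations} (with the $\delta$-dependent constants absorbed into $c_\delta$) is a legitimate substitute for the paper's explicit exponential domination; no truncation is actually needed since the exponential tail hypothesis of Lemma \ref{lemmadeviations} is already satisfied by $\tilde Y_i$ for $x \ge x_0(\delta)$.
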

\begin{proof}
By time 
$U_i + \mathscr{L}_i/M^2$,
all jobs in $\mathscr{C}_i$ have arrived at Class 4, 
and jobs arriving at Class 3 after then 
have priority over these jobs.
Consequently, the time over $(U_i, U_{i+1}]$ that is 
available for service of  jobs in Class 4 is at most
$\mathscr{L}_i/M^2 +Y_i$.  Because of Lemma \ref{lemmaXandtildeX}, 
the dominating sequence $(\mathscr{L}_i/M^2 +\tilde{Y}_i)_{i\in \mathbb{Z}_+}$ is i.i.d.

Each job at Class 4 requires $\delta^3$ amount of service.  
Applying Lemmas \ref{lemmaclusters} and \ref{lemmaXandtildeX}, it follows that,
on  $A_{t_0}$,
\begin{equation}
\label{L+Ybounds}
D_4(t_0) - D_4^o(t_0) \le 
\sum_{i=1}^{\lceil 2t_0/M\rceil} (\mathscr{L}_i/M^2 +Y_i)/\delta^3
\le \sum_{i=1}^{\lceil 2t_0/M\rceil} (\mathscr{L}_i/M^2 +\tilde{Y}_i)/\delta^3.
\end{equation}
It follows quickly from (\ref{entrancelaws}) that
\begin{equation*}
\label{clusterlength}
P(\mathscr{L}_i \ge \ell) \le \text{e}^{-\ell /M} 
\quad \text{for any } \ell \ge 0.
\end{equation*}
On the other hand, by (\ref{driftbound}) of Lemma \ref{lemmadrift}, with $\eta = \delta^3/2$, 
\begin{equation*}
P(\tilde{Y}_i \ge x) \le (C/\delta^6)\text{e}^{-c\delta^6x} \quad \text{for }  x\ge 0,
\end{equation*}
and some $C,c >0$. 
It follows from these two displays (the main contribution is from the latter) that
the summands on the right hand side of (\ref{L+Ybounds})
are dominated by random variables 
\begin{equation*}
\label{decompintoexpl}
V_i := (2/c\delta^9) (R_i + \text{log}[2C/\delta^6]),
\end{equation*}
where $R_i$ are independent and mean-1 exponentially distributed.

By (\ref{eqLD}), for $y\ge y_0 >0$ and some $C',c' >0$
depending on $y_0$,
\begin{equation*}
P\bigg{(}\sum_{i=1}^{b_t} R_i \ge b_t (1+y)\bigg{)} \le C'\text{e}^{-c'b_t y}.
\end{equation*}
Substituting in $V_i$ for $R_i$ and applying $\delta = M^{1/15}$, it  follows from the above two displays and a bit of computation that
\begin{align*}
\label{L+Xb}
P\bigg{(}\sum_{i=1}^{\lceil 2t_0/M \rceil} V_i \ge \delta^3 t_0 \bigg{)} &  \le 
C'\text{exp}\Big\{-c'\delta^{12} t_0 \Big(\frac{c}{2} - 2\delta^3 (\text{log}[2C/\delta^6] + 1) \Big) \Big\}  \\
&  \le C'\text{e}^{-cc'\delta^2 t_0 /4}, 
\end{align*}
with the second inequality holding since $c/2$ is the dominant term inside of the large parentheses on the
right hand side.
The desired inequality (\ref{class4control}) follows from this display and
(\ref{L+Ybounds}).

\end{proof}

We denote by $S_1$ the stopping time at which $Z_2(t)=0$ first occurs.  In the remainder of this section and in
Section \ref{proofoftheorem}, we will examine the behavior of $X(S_1)$, and then
restart the process there.  Our first result provides an elementary upper
bound on $S_1$.

\begin{proposition}
\label{propSupperbound}
Let $X(t)$ be the Markov process associated with the queueing network
in Figure \ref{fig1} satisfying (\ref{entrancelaws})-(\ref{means}), with
initial conditions satisfying $Z_1(0) + Z_2(0) \le 2N$ for some $N$.   Then
\begin{equation}
\label{Supperbound}
P(S_1 \ge 2N/\delta) \le C_{\delta} \text{e}^{-c_{\delta}N}
\end{equation}
for appropriate $C_{\delta},c_{\delta}>0$.
\end{proposition}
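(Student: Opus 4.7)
The plan is to derive a contradiction, on the event $\{S_1 \ge 2N/\delta\}$, between a large deviation lower bound on the number of Class 2 service completions and a large deviation upper bound on the number of arrivals to Class 2 over the interval $[0, 2N/\delta]$. The starting observation is that on $\{S_1 \ge 2N/\delta\}$, Class 2 is continuously busy on $[0, 2N/\delta)$. Since the service times at Class 2 are i.i.d.\ exponential with mean $1-\delta$ and Class 2 is a single-class station (so it cannot lose service to another class), memorylessness implies that on this event $D_2(t)$ agrees in distribution, for $t \in [0, 2N/\delta]$, with the renewal count at time $t$ of a renewal process with i.i.d.\ mean-$(1-\delta)$ exponential interrenewal times.

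Applying the inverted large deviation estimate (\ref{invertedLD}) to this renewal count with $\mu = 1-\delta$, $\beta = \delta^2/4$, and $t = 2N/\delta$ yields
\[
  D_2(2N/\delta) \, \ge \, (1 - \delta^2/4) \cdot \frac{2N/\delta}{1-\delta}
\]
off an exceptional set of probability $C_\delta \mathrm{e}^{-c_\delta N}$ (indeed $c\beta^{\star} t$ here is of order $\delta^3 N$). Separately, every arrival at Class 2 comes either from the initial backlog of $Z_1(0)$ jobs at Class 1 or from an external arrival at Class 1 that has since passed through Class 1, so $A_2(t) \le Z_1(0) + A_1(t)$. Since $\nu$ has mean $1$, a second application of (\ref{invertedLD}) with $\mu = 1$ and $\beta = \delta^2/4$ gives
\[
  A_1(2N/\delta) \, \le \, (1 + \delta^2/4) \cdot (2N/\delta)
\]
off a similarly exceptional set.

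Combining these with the flow balance $Z_2(t) = Z_2(0) + A_2(t) - D_2(t)$ and the hypothesis $Z_1(0) + Z_2(0) \le 2N$, the requirement $Z_2(2N/\delta) \ge 1$ on $\{S_1 \ge 2N/\delta\}$ would force
\[
  (1 - \delta^2/4) \cdot \frac{2N/\delta}{1-\delta} \, \le \, 2N + (1 + \delta^2/4) \cdot (2N/\delta).
\]
A short computation (dividing by $2N/\delta$ and simplifying) shows the left side exceeds the right by an amount of order $N\delta/(1-\delta)$, which is impossible once $\delta < 1$ and $N \ge 1$. Thus $\{S_1 \ge 2N/\delta\}$ is contained in the union of the two exceptional events, yielding the claimed bound.

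The step I expect to require the most care is the first one: the justification that on $\{S_1 \ge 2N/\delta\}$ the completion process $D_2(\cdot)$ can be treated as an undisturbed renewal count to which (\ref{invertedLD}) applies. This hinges on both Class 2 being a single-class station (ruling out cross-station preemption) and the memorylessness of its exponential service (so that LIFO preemption within Class 2 preserves the distribution of completion times). Once this is in place, the two invocations of (\ref{invertedLD}) and the final arithmetic are routine.
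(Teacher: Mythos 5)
Your proposal is correct and follows essentially the same route as the paper: bound $A_2(t) \le A_1(t) + Z_1(0)$, apply (\ref{invertedLD}) to the Class~2 departure count (using memorylessness and the fact that Class~2 is a single-class station) and to $A_1$, and derive a contradiction from $Z_2(2N/\delta) > 0$ together with $Z_1(0)+Z_2(0) \le 2N$. The paper phrases the contradiction as $D_2(2N/\delta)$ exceeding the total number of jobs ever to visit Class~2, while you phrase it via the flow-balance identity, but these are the same argument; your write-up just makes the justification for treating $D_2$ as a renewal count, and the final arithmetic, more explicit.
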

\begin{proof}
For any time $t$, $A_2(t) \le A_1(t) + Z_1(0)$; adding $Z_2(0)$ to this
gives an upper bound on the number of jobs to have visited Class 2 by
time $t$.  On the other hand, since the interarrival distribution satisfies (\ref{entrancelaws}), with mean 1, and $m_2 = 1 - \delta$, 
it follows from (\ref{invertedLD}) that
\begin{align*}
P\big(D_2(2N/\delta) \le  A_1(2N/\delta) + Z_1(0) + Z_2(0)\,\,;\,\, & Z_2(s) > 0 \, \text{ for all } s\in [0,t]\big) \\
& \quad \le  C_{\delta} \text{e}^{-c_{\delta}N},
\end{align*}
for appropriate $C_{\delta},c_{\delta}>0$.   Off of the exceptional set in the display,
$S_1 < 2N/\delta$, which implies (\ref{Supperbound}).
 
\end{proof}
The following result is a quick consequence of Propositions \ref{propclass4control} and \ref{propSupperbound}. 
 It provides an upper bound on the number of jobs
ever to visit Classes 5 and 6 over $t\in [0,S_1]$ and is important for
establishing the long-term cyclical growth of $Z(t)$.
\begin{corollary}
\label{corclass56control}
Let $X(t)$ be the Markov process associated with the queueing network
in Figure \ref{fig1} satisfying (\ref{entrancelaws})-(\ref{means}), with
initial conditions satisfying $Z_1(0) + Z_2(0) \le 2N$ for some $N$.   Then
\begin{equation}
\label{class4exitcontrol}
P\big(D_4(S_1) \ge 2\delta^2 N \!+\! Z_4(0)\big) \,\le \, 
P\big(D_4(S_1) \ge  2\delta^2 N \!+\! D_4^o (S_1)\big) \,\le \,
C\,_{\delta} \text{e}^{-c_{\delta}N}
\end{equation}
for appropriate $C_{\delta},c_{\delta}>0$.  Hence, denoting by $\mathscr{V}_{S_1}$ the total
number of jobs ever to be in either Class 5 or Class 6 over $[0,S_1]$,
\begin{equation}
\label{class56control}
P\Big(\mathscr{V}_{S_1} \ge 2\delta^2 N + \sum\nolimits_{k=4}^6 Z_k(0)\Big)  
\le C_{\delta} \text{e}^{-c_{\delta}N}.
\end{equation}
\end{corollary}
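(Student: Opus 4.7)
The plan is to reduce both inequalities to Propositions \ref{propclass4control} and \ref{propSupperbound}, together with the elementary identity $\mathscr{V}_{S_1} = Z_5(0) + Z_6(0) + D_4(S_1)$. The first inequality of (\ref{class4exitcontrol}) is immediate because $D_4^o(S_1) \le Z_4(0)$, so $\{D_4(S_1) \ge 2\delta^2 N + Z_4(0)\}$ is a subset of $\{D_4(S_1) \ge 2\delta^2 N + D_4^o(S_1)\}$. The substance is the second inequality.

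For the second inequality, set $t_0 = 2N/\delta$ so that the threshold $\delta^3 t_0$ in Proposition \ref{propclass4control} becomes exactly $2\delta^2 N$, and split
\[
P\bigl(D_4(S_1) \ge 2\delta^2 N + D_4^o(S_1)\bigr) \le P\bigl(D_4(S_1) \ge 2\delta^2 N + D_4^o(S_1);\, S_1 \le t_0\bigr) + P(S_1 > t_0).
\]
The hypothesis $Z_1(0) + Z_2(0) \le 2N$ is in force, so Proposition \ref{propSupperbound} bounds the second term by $C_\delta \mathrm{e}^{-c_\delta N}$. For the first term, on $\{S_1 \le t_0\}$ apply the argument of Proposition \ref{propclass4control} at the stopping time $\tau := S_1$ in place of the deterministic time. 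Because $Z_2(t) > 0$ for $t < \tau$ by definition of $S_1$, the event $A_\tau$ holds pathwise, so Lemmas \ref{lemmaclusters} and \ref{lemmaXandtildeX} give
\[
D_4(\tau) - D_4^o(\tau) \le \sum_{i=1}^{\lceil 2t_0/M\rceil}(\mathscr{L}_i/M^2 + \tilde{Y}_i)/\delta^3,
\]
where the cluster count satisfies $K(\tau) \le K(t_0) \le \lceil 2t_0/M\rceil$ on $\{S_1 \le t_0\}$ and $Y_i \le \tilde{Y}_i$ for all $i$ on $A_\tau$. The remainder of the proof of Proposition \ref{propclass4control} applies verbatim to this deterministic-length i.i.d.\ sum and yields the bound $C_\delta \mathrm{e}^{-c_\delta N}$.

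Finally, (\ref{class56control}) follows by observing that every job ever present at Class 5 or Class 6 during $[0,S_1]$ is counted by $Z_5(0)$, by $Z_6(0)$, or by a Class-4 departure; hence $\mathscr{V}_{S_1} = Z_5(0) + Z_6(0) + D_4(S_1)$. Combined with the already-established first inequality of (\ref{class4exitcontrol}), this gives (\ref{class56control}) at once. The one nonroutine step is verifying that Proposition \ref{propclass4control}, proved for a deterministic $t_0$, still applies when $t_0$ is replaced by the stopping time $\tau = S_1 \wedge t_0$; this works because the number of summands $\lceil 2t_0/M\rceil$ is deterministic and the dominating i.i.d.\ sequence $(\tilde{Y}_i)$ is defined independently of $\tau$, while $A_\tau$ holds pathwise by the definition of $S_1$.
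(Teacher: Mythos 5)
Your proof is correct and takes exactly the route the paper indicates (the corollary is stated without proof, only the remark that it is ``a quick consequence of Propositions \ref{propclass4control} and \ref{propSupperbound}''). You correctly supply the one detail the paper glosses over: Proposition \ref{propclass4control} is stated at a deterministic $t_0$ with the event $A_{t_0}$ as a conditioning set, and that event \emph{fails} when $S_1<t_0$, so one cannot simply set $t_0=2N/\delta$ and invoke the proposition as stated. Your fix — run the argument inside the proof of Proposition \ref{propclass4control} at the stopping time $S_1$, observe that $A_{S_1}$ holds pathwise by definition of $S_1$, bound the cluster count on $\{S_1\le t_0\}$ by the deterministic $\lceil 2t_0/M\rceil$, and then invoke the large-deviation bound on the i.i.d.\ dominating sum $\sum(\mathscr{L}_i/M^2+\tilde Y_i)/\delta^3$, which has deterministically many summands and is defined independently of $S_1$ — is the correct reading and is what makes the corollary ``quick'' once one sees it. The reduction of (\ref{class56control}) to (\ref{class4exitcontrol}) via the identity $\mathscr{V}_{S_1}=Z_5(0)+Z_6(0)+D_4(S_1)$ is also right (each new visitor to Classes 5 or 6 on $(0,S_1]$ is precisely a Class 4 departure). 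Two trivial notational points: the first inequality uses $D_4^o(S_1)\le Z_4(0)$, as you say; and you should write $P(S_1\ge t_0)$ rather than $P(S_1>t_0)$ to match Proposition \ref{propSupperbound}'s statement, though of course $P(S_1>t_0)\le P(S_1\ge t_0)$ so nothing is lost.
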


We employ Corollary \ref{corclass56control} to obtain the following upper
bound on $Z_1(t)$, for $t\in [0,S_1]$, and 
the following lower bound on $S_1$.  

\begin{proposition}
\label{propclass12control}
Let $X(t)$ be the Markov process associated with the queueing network
in Figure \ref{fig1} satisfying (\ref{entrancelaws})-(\ref{means}), with
initial conditions satisfying those of 
Theorem \ref{propinductionstep} for some $N \ge 2M/\delta$. 
Then 
\begin{equation}
\label{class1control}
P(Z_1(t) \ge 7 \delta N \text{ for some } t\in [0,S_1]) \le C_{\delta} \text{e}^{-c_{\delta}N}
\end{equation}
and
\begin{equation}
\label{Slowerbound}
P(S_1 \le N/2\delta) \le C_{\delta} \text{e}^{-c_{\delta}N}
\end{equation}
for appropriate $C_{\delta},c_{\delta}>0$.
\end{proposition}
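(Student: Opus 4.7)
Part 1, the bound $Z_1(t)\le 7\delta N$ for $t\in[0,S_1]$. The plan is to mimic the cluster decomposition of Class 4 arrivals from the lead-up to Lemma \ref{lemmaclusters}, now applied to the external arrivals at Class 1, which also alternate between bursts of short interarrivals of length $1/M^2$ and long interarrivals of length at least $\gamma M$. Write $\mathscr{C}_i'$ for the Class 1 clusters, $\mathscr{L}_i'$ for their sizes (satisfying $P(\mathscr{L}_i'\ge\ell)\le e^{-\ell/M}$), and $\tau_i'\ge\gamma M$ for their separations. By Proposition \ref{propSupperbound}, $S_1\le 2N/\delta$ off an exceptional set of probability $C_\delta e^{-c_\delta N}$, so at most $\lceil 4N/(M\delta)\rceil+1$ clusters overlap $[0,S_1]$. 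A union bound combined with $\delta N/M\ge 2$ (from $N\ge 2M/\delta$) then yields that every such cluster has size at most $\delta N$ off an exceptional set of probability $C_\delta e^{-c_\delta N}$.

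On this event, together with Corollary \ref{corclass56control} (which bounds the total Class 6 service time at Station I over $[0,S_1]$ by $3\delta N$ with high probability), I would track the residual Class 1 queue at the end of gap $i$ through the Lindley-type recursion $\Delta_i=(\Delta_{i-1}+\mathscr{L}_i'-(\tau_i'-c_i)/\delta^3)^+$, where $c_i$ is the Class 6 service absorbed in gap $i$. The aggregate drain capacity $\sum_i(\tau_i'-c_i)/\delta^3$ is of order $N/\delta^4$ and vastly exceeds $\sum_i \mathscr{L}_i'\le A_1(S_1)\le 4N/\delta$, so the walk $\Delta_i$ has strongly negative per-step drift. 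Converting this into the uniform bound $\Delta_i\le 6\delta N$ then gives $Z_1(t)\le\Delta_{i-1}+\mathscr{L}_i'\le 7\delta N$ throughout cluster $i$. The main obstacle is making this uniform-in-$i$ bound rigorous in the worst case where the Class 6 interference is concentrated in a single gap, which is handled using the rapid drain rate $\delta^{-3}$ and the lower bound $\tau_i'\ge\gamma M$.

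Part 2, the bound $S_1\ge N/(2\delta)$. Since $Z_2(t)=N+A_2(t)-D_2(t)$, $S_1$ is the first $t$ at which $D_2(t)-A_2(t)=N$. Over $[0,S_1)$, Class 2 is continuously busy and $D_2$ is a Poisson process of rate $1/(1-\delta)$; Lemma \ref{lemmadeviations} with $\beta=\delta^2$, used together with (\ref{LDforallt}), gives $D_2(s)\le s/(1-\delta)+\delta^2 s$ uniformly over $s$ in a suitable range around $N/(2\delta)$ off an exceptional set of probability $C_\delta e^{-c_\delta N}$. For $A_2(s)=D_1(s)=A_1(s)+Z_1(0)-Z_1(s)$, Part 1 provides $Z_1(s)\le 7\delta N$, and the same Lemma applied to the renewal $A_1$ gives $A_1(s)\ge(1-\delta^2)s$ uniformly on that range with high probability, so $A_2(s)\ge(1-\delta^2)s-7\delta N$. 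Subtracting, $D_2(s)-A_2(s)\le s\delta/(1-\delta)+2\delta^2 s+7\delta N$; at $s=N/(2\delta)$ this equals $N/2+O(\delta N)<N$ for small $\delta$. Combined with the trivial bound $D_2(s)-A_2(s)\le D_2(N/(4\delta))<N$ for $s\le N/(4\delta)$, one concludes $D_2(s)-A_2(s)<N$ for all $s\le N/(2\delta)$, so $Z_2(s)>0$ on this range and hence $S_1>N/(2\delta)$ on the high-probability intersection of the above events.
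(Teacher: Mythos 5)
Your Part 2 argument is essentially the paper's: compare a large-deviations upper bound for $D_2$ against a lower bound for $A_2 = D_1$ obtained from $A_1$ and the Part 1 control on $Z_1$, and conclude $Z_2(s) > 0$ for $s \le N/(2\delta)$. One small defect: the ``trivial bound'' $D_2(N/(4\delta)) < N$ is false, since the Class 2 server works at rate $\approx 1/(1-\delta)$, so $D_2(N/(4\delta)) \approx N/(4\delta) \gg N$. The right way to handle small $s$ is what the paper does: fold an additive slack of order $\delta N$ into the uniform-in-$t$ large-deviations bounds for $A_1$ and $D_2$ (display (\ref{noslow1arrivals})--(\ref{nofast2departures})), which renders them valid from $t=0$ onward.

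For Part 1 you take a genuinely different route -- a cluster decomposition of the Class 1 arrival stream with a Lindley-type recursion -- and there is a real gap at exactly the point you flag. The recursion $\Delta_i = (\Delta_{i-1} + \mathscr{L}_i' - (\tau_i'-c_i)/\delta^3)^+$ can accumulate mass over \emph{every} gap $i$ in which the Class 6 interference $c_i$ eats essentially the whole gap $\tau_i' \approx \gamma M$. Since the total Class 6 work available over $[0,S_1]$ is of order $\delta N$, and $\delta N \ge 2M$ can be arbitrarily large compared with $M$, roughly $\delta N / (\gamma M)$ consecutive gaps could in principle be fully blocked; across those gaps $\Delta$ can grow by up to $\delta N$ per gap, far above your target $6\delta N$. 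Negative \emph{aggregate} drift does not control the running maximum when the drift can be positive on a burst of $\gg 1$ consecutive steps. To repair this along your lines you would need an additional argument that Class 6 arrivals (and hence interference) cannot cluster on the $M$ time scale, which is not obviously available from the stated lemmas. The paper avoids the issue entirely by working on the coarser time scale $3\delta N$: it bounds the \emph{total} work ever present at Station I over $[0,S_1]$ by $2\delta N + 3\delta^2 N < 3\delta N$ (using Corollary \ref{corclass56control} for the Class 6 part, $m_1 = \delta^3$ and a bound on $A_1$ for the Class 1 part), so the server cannot be busy across any window of length $3\delta N$, forcing Class 1 to empty in every such window ($\mathscr{A}_s$). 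That is paired with a simple arrival count showing fewer than $7\delta N$ jobs can arrive in a window of length $6\delta N$, and then a union bound over windows. This is robust to arbitrary temporal concentration of Class 6 work, which is exactly where your recursion breaks.
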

\begin{proof}  The demonstration of 
 (\ref{class1control}) is rather long; by employing
 (\ref{class1control}), the demonstration of 
(\ref{Slowerbound}) will be quick.

\emph{Demonstration of (\ref{class1control})}.
For given $s$, set
\begin{align*}
&\mathscr{A}_s = \{Z_1(t) = 0 \text{ for some } t\in [s,s+3\delta N]\}, \\
\mathscr{B}_s  = & \{Z_1(t) \ge 7\delta N \text{ for some } 
t\in [s+3\delta N, s+6\delta N]\}.
\end{align*}
We claim that
\begin{equation}
\label{class1bothbounds}
P(\mathscr{A}_s \cap \mathscr{B}_s) \le C_{\delta} \text{e}^{-c_{\delta} N} \quad \text{and} \quad  
P(\mathscr{A}^c_s) \le C_{\delta} \text{e}^{-c_{\delta} N} 
\end{equation}
for $s\le S_1 - 3\delta N$ and appropriate $C_{\delta},c_{\delta}>0$.  On $\mathscr{A}_s$, there is at most time
$6\delta N$ for at least $7 \delta N$ jobs to arrive at empty  Class 1 in
order for
$\mathscr{B}_s$ to hold, so the first inequality in the display follows from
(\ref{entrancelaws}) and (\ref{invertedLD}) applied to
$A_1(s+ 6\delta N) - A_1(s)$. 

For the second inequality, note that, on account of (\ref{eqtime0-1}) and 
(\ref{class56control}) of 
Corollary \ref{corclass56control}, there are typically at most $2\delta N$
jobs in Class 6 to interfere with the service of Class 1 jobs over
times $[s,s+3\delta N]$, and they require only time $2\delta N$ to be 
served.  Also, the service time of Class 1 jobs is $ \delta^3$,
$Z_1(0) \le \delta N$
and, by (\ref{Supperbound}) of Proposition \ref{propSupperbound}, the
number of arrivals by time $s+3\delta N$  is at most  $A_1(2N/\delta)$.
Applyimg (\ref{invertedLD}), the time required to serve all Class 1
jobs arriving by time $S_1$ is therefore typically at most $3\delta^2 N$.
Since  $2\delta N + 3\delta^2 N < 3\delta N$,
the second inequality follows.

By (\ref{class1bothbounds}),
\begin{equation*}
P(\mathscr{B}_s) \le 2C_{\delta} \text{e}^{-c_{\delta} N}.
\end{equation*}
Denoting by $I$ the set of $s$ that are multiples of $3\delta N$ and
 $s\le S_1 - 3\delta N$, it follows that
\begin{equation}
\label{class1emptying}
P\Big(\bigcup\nolimits_{s\in I} \mathscr{B}_s \Big) \le C_{\delta} \text{e}^{-c_{\delta} N}
\end{equation}
for another choice of $C_{\delta},c_{\delta}>0$.  The bound in 
 (\ref{class1control}), but restricted to $t\in [3\delta N, S_1]$, follows
from (\ref{class1emptying}).  The extension to $t\in [0,S_1]$ follows from
(\ref{eqtime0-1}), (\ref{class56control}), 
and (\ref{invertedLD}) applied to $A_1(3\delta N)$.

\emph{Demonstration of (\ref{Slowerbound}).}  Since $N\ge 2M/\delta$, one has
$U_1 (0) \le \delta N$.  Therefore, by (\ref{entrancelaws}),
 (\ref{means}),  and 
(\ref{LDforallt}),
\begin{align}
& \,\,\,\, \, P\big(A_1(t) \le (1 - \delta^2)t - 2\delta N \text{ for any } t\ge 0\big) \le  C_{\delta} \text{e}^{-c_{\delta}N},
\label{noslow1arrivals}\\
&P\big(D_2(t) \ge (1 + \delta +2\delta^2)t + \delta N \text{ for any } t\ge 0 \big) \le  C_{\delta} \text{e}^{-c_{\delta} N}, \label{nofast2departures}
\end{align}
for appropriate $C_{\delta},c_{\delta}>0$.  Together with $Z_2(0)\ge N$ and (\ref{class1control}), 
(\ref{noslow1arrivals}) and (\ref{nofast2departures})
imply that
$ Z_2(t) >0$ {for } $t\le N/2\delta$,
off of the exceptional sets in (\ref{class1control}), (\ref{noslow1arrivals}), and (\ref{nofast2departures}).  This implies (\ref{Slowerbound}).
 
\end{proof}

We employ the preceding three propositions to obtain the following upper
bound on $W_3(S_1)$ and lower bound on $Z_4(t)$, for $t\in [N/3,S_1]$. 
We require a bound
on $W_3(S_1)$ rather than on $Z_3(S_1)$ because of the possible presence
of many mostly served jobs at Class 3.

\begin{proposition}
\label{propclass34control}
Let $X(t)$ be the Markov process associated with the queueing network
in Figure \ref{fig1} satisfying (\ref{entrancelaws})-(\ref{means}), with
initial conditions satisfying those of 
Theorem \ref{propinductionstep} for some $N\ge 2M/\delta$.
Then
\begin{equation}
\label{class3control}
P\big(W_3(S_1) \ge 7\delta^2 N\big) \le C_{\delta} \text{e}^{-c_{\delta}N}
\end{equation}
and
\begin{equation}
\label{class4boundovert}
P\big(Z_4(t) \le (1-5\delta)t  \,\, \text{ for some } t\in [N/3,S_1]\big) \le C_{\delta} \text{e}^{-c_{\delta}N}
\end{equation}
for appropriate $C_{\delta},c_{\delta}>0$.
\end{proposition}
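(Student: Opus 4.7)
My plan is to establish the two bounds in Proposition \ref{propclass34control} separately. The first, on $W_3(S_1)$, follows from work conservation at Station IV together with Proposition \ref{propclass4control} and a Skorokhod-type control of the idle time of Server~IV; the second, the lower bound on $Z_4(t)$, is a short combination of Proposition \ref{propclass4control} with the uniform large-deviations estimate (\ref{LDforallt}).

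For the workload bound I would use work conservation at Class~3. Writing $I_{IV}(S_1)$ for the cumulative idle time of Server~IV on $[0,S_1]$ and $B_4(S_1)$ for the cumulative time it spends on Class~4 there, the Class~3 service time equals $S_1 - I_{IV}(S_1) - B_4(S_1)$, and since Class~3 has deterministic service time $m_3$,
\[
W_3(S_1) \;=\; W_3(0) + D_2(S_1)\, m_3 - S_1 + I_{IV}(S_1) + B_4(S_1).
\]
I would then bound each term: $W_3(0)\le\delta^2 N$ by hypothesis; for the excess-arrival term use the identity $D_2(S_1)\,m_3 - S_1 = \delta^3 D_2(S_1) + \bigl(D_2(S_1)\,m_2 - S_1\bigr)$ and note that, since Class~2 is busy throughout $[0,S_1]$ by definition of $S_1$, the inter-departure times of $D_2$ are i.i.d.\ exponentials of mean $m_2$, so applying (\ref{invertedLD}) with $\beta$ of order $\delta^3$ together with $S_1\le 2N/\delta$ from Proposition \ref{propSupperbound} yields $D_2(S_1)\,m_3 - S_1 \le C\delta^2 N$ on an event of probability $1-C_\delta e^{-c_\delta N}$; finally, the term $B_4(S_1)$ is bounded directly via the proof of Proposition \ref{propclass4control}, since the estimate there $\sum_i (\mathscr{L}_i/M^2 + \tilde{Y}_i) \le \delta^3 S_1$ is precisely a high-probability bound on the total time Server~IV can possibly spend on Class~4, so $B_4(S_1) \le \delta^3 S_1 \le 2\delta^2 N$ with the same probability.

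The main obstacle is the bound on $I_{IV}(S_1)$. I would argue $I_{IV}(S_1) \le C\delta^2 N$ via the Skorokhod reflection formula: since the cumulative work arriving at Station~IV over $[0,s]$ is at least $D_2(s)\,m_3$,
\[
I_{IV}(S_1) \;\le\; \sup_{0\le s\le S_1}\bigl(s - D_2(s)\, m_3\bigr)^+.
\]
Evaluating the right side just after the $n$-th Class~2 departure at time $T_n = \sum_{i=1}^n X_i$ gives $m_3 + \sum_{i=1}^n (X_i - m_3)$, where the $X_i$ are i.i.d.\ exponentials of mean $m_2$, a random walk with negative drift $m_2 - m_3 = -\delta^3$. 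Since the $X_i$ have a full moment generating function, the adjustment-coefficient equation $E\bigl[e^{\theta(X_i - m_3)}\bigr] = 1$ has a positive solution $\theta \sim 2\delta^3$, and the Lundberg-type inequality $P\bigl(\sup_n \sum_{i=1}^n (X_i - m_3) > x\bigr) \le e^{-\theta x}$ applied with $x$ of order $\delta^2 N$ gives the required bound on $I_{IV}(S_1)$. Summing the four contributions and tuning the constants gives $W_3(S_1) \le 7\delta^2 N$ on an event of the stated probability.

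For the lower bound on $Z_4(t)$ in (\ref{class4boundovert}), write $Z_4(t) \ge A_4(t) - D_4(t)$. By Proposition \ref{propclass4control} applied at time $t \le S_1$ (on which $A_t$ holds automatically by definition of $S_1$), $D_4(t) \le \delta^3 t + D_4^o(t) \le \delta^3 t + \delta N$ off an exceptional set of probability $\le C_\delta e^{-c_\delta t}$, using $D_4^o(t) \le Z_4(0) \le \delta N$. By (\ref{LDforallt}) applied to the renewal process $A_4$ with $\beta = \delta$, $A_4(t) \ge (1-\delta)\, t$ uniformly for $t \ge N/3$ off a set of similar probability. Combining these two estimates and using $\delta N \le 3\delta t$ for $t \ge N/3$,
\[
Z_4(t) \;\ge\; (1-\delta)\, t - \delta^3 t - \delta N \;\ge\; (1 - 4\delta - \delta^3)\, t \;\ge\; (1 - 5\delta)\, t
\]
for $\delta$ sufficiently small, completing the argument.
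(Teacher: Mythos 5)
Your argument for the bound (\ref{class3control}) on $W_3(S_1)$ is correct but takes a genuinely different route from the paper. The paper obtains (\ref{class3control}) by applying (\ref{LDforWestimate}) of Lemma \ref{lemmadrift} essentially as a black box to the Class~3 queue fed by the Poisson departure stream from Class~2, using $m_3 - m_2 = \delta^3$ as $\eta$, and then substituting in the established bounds $S_1\in[N/2\delta,2N/\delta]$; it does not explicitly unwind the idle-time and Class~4-service contributions. Your proposal instead starts from the exact work-conservation identity at Station~IV and bounds the four pieces ($W_3(0)$, $D_2(S_1)m_3-S_1$, $I_{IV}(S_1)$, $B_4(S_1)$) individually, handling the idle term with a Skorokhod/Lundberg argument and the $B_4$ term by tracing back through the proof of Proposition~\ref{propclass4control}. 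This is more verbose but also more transparent about why the server stealing time for Class~4 is harmless, a point the paper leaves implicit inside the coefficient $3$ in (\ref{helpW3lowerbound}). (Minor inaccuracy: the high-probability event in Proposition~\ref{propclass4control} actually gives $\sum_i(\mathscr{L}_i/M^2+\tilde Y_i)\le \delta^6 t_0$, not $\delta^3 t_0$, but this is even smaller than you claim, so your conclusion $B_4(S_1)\le 2\delta^2 N$ is unaffected.)

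For (\ref{class4boundovert}) your route is the same as the paper's, but there is a real slip in the arithmetic that only works out by compensating errors. The bound $A_4(t)\ge (1-\delta)t$ uniformly on $t\ge N/3$ is not correct as stated: (\ref{LDforallt}) applies to a renewal process starting fresh, while $A_4$ carries the residual interarrival time $U_4(0)\le 2M\le\delta N$, which for $t$ as small as $N/3$ shaves off roughly $3\delta t$. The paper therefore proves only $A_4(t)\ge(1-4\delta)t$. If you substitute $(1-4\delta)t$ into your chain $Z_4(t)\ge A_4(t)-\delta^3 t-\delta N$ and use $\delta N\le 3\delta t$, you land at $(1-7\delta-\delta^3)t$, which is weaker than $(1-5\delta)t$. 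The way to recover the stated constant is not to drop $Z_4(0)$: write $Z_4(t)=Z_4(0)+A_4(t)-D_4(t)$ and observe that $D_4^o(t)\le Z_4(0)$, so $Z_4(t)\ge A_4(t)-\big(D_4(t)-D_4^o(t)\big)\ge(1-4\delta)t-\delta^3 t\ge(1-5\delta)t$. Your version discards $Z_4(0)$ and then subtracts the whole $D_4^o(t)\le\delta N$, losing that $\delta N$ twice, but happens to compensate by the overly optimistic $(1-\delta)t$.
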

\begin{proof}
Since $N\ge 2M/\delta$, one has
$U_4 (0) \le \delta N$.
Together with (\ref{entrancelaws}) and (\ref{LDforallt}), this implies
\begin{equation*}
P\big(A_4(t) \le (1-4\delta)t \,\,\, \text{ for some } t\ge N/3 \big) \le C_{\delta} \text{e}^{-c_{\delta}N}
\end{equation*}
for appropriate $C_{\delta},c_{\delta}>0$.
Together with (\ref{class4control}) of
Proposition \ref{propclass4control}, this implies (\ref{class4boundovert}).

Since $m_2 = 1 - \delta$, $m_3-m_2 = \delta^3$, and $W_3(0)\le \delta^2 N$, 
it follows from
 (\ref{LDforWestimate}) of Lemma \ref{lemmadrift} that, for given $t_0$,
\begin{equation}
\label{helpW3lowerbound}
P\big(W_3(t) \ge 3\delta^3 t  + \delta^2 N \text{ for some } t\ge t_0 \big) \le C_{\delta} \text{e}^{-c_{\delta}t_0},
\end{equation}
for appropriate $C_{\delta},c_{\delta} > 0$. By (\ref{Supperbound}) of Proposition
 \ref{propSupperbound} and (\ref{Slowerbound}) of Proposition
 \ref{propclass12control}, $S_1 \in (N/2\delta, 2N/\delta)^c$ occurs only on an exceptional
set.  Inequality (\ref{class3control}) follows by applying these bounds 
together with that in (\ref{helpW3lowerbound}).

\end{proof}

The following corollary of Proposition \ref{propclass34control} allows us to improve on Corollary \ref{corclass56control} by bounding more precisely 
the number of jobs leaving
Class 4 after time $N/3$ and the total number of jobs to be in Classes 5 or 6 over $[N,S_1]$.

\begin{corollary}
\label{cornodepfrom4}
Let $X(t)$ be the Markov process associated with the queueing network
in Figure \ref{fig1} satisfying (\ref{entrancelaws})-(\ref{means}), with
initial conditions satisfying those of 
Theorem \ref{propinductionstep} for some $N \ge 2M/\delta$.
Then
\begin{equation}
\label{nodepfrom4}
P(D_4^o (S_1) \neq D_4^o (N/3)) \le  C_{\delta} \text{e}^{-c_{\delta}N},
\end{equation}
and
\begin{equation}
\label{class4controlwo4}
P(D_4(S_1) - D_4(N/3) \ge 2\delta^2 N) \le C_{\delta}\text{e}^{-c_{\delta} N}
\end{equation}
for appropriate $C_{\delta},c_{\delta}>0$.  Denoting by $\mathscr{V}_{S_1}^N$ 
the total
number of jobs ever to be in either Class 5 or Class 6 over $[N,S_1]$,
\begin{equation}
\label{class56controlstronger}
P(\mathscr{V}_{S_1}^N \ge 2\delta^2 N)  
\le C_{\delta} \text{e}^{-c_{\delta}N}.
\end{equation}
for a new choice of $c_{\delta}>0$.
\end{corollary}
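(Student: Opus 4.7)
The plan is to prove the three inequalities in sequence, each building on the previous one and on the propositions already established in this section.

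For (\ref{nodepfrom4}), I would invoke LIFO priority at Station IV: an original Class 4 job (with $s_i = \infty$) has the lowest priority among all jobs at Station IV and can only be served when no Class 3 jobs and no non-original Class 4 jobs are present. From (\ref{class4boundovert}) of Proposition \ref{propclass34control}, off a set of probability $\le C_\delta \text{e}^{-c_\delta N}$, we have $Z_4(t) \ge (1-5\delta) t \ge (1-5\delta) N/3$ for all $t \in [N/3, S_1]$; for small $\delta$ this exceeds $\delta N \ge Z_4^o(0) \ge Z_4^o(t)$, so non-original Class 4 jobs are always present during $[N/3, S_1]$. Hence no original Class 4 job is ever served during this interval, which gives (\ref{nodepfrom4}).

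For (\ref{class4controlwo4}), I would start from the identity
\[
D_4(S_1) - D_4(N/3) = (D_4(S_1) - D_4^o(S_1)) + (D_4^o(S_1) - D_4^o(N/3)) - (D_4(N/3) - D_4^o(N/3)).
\]
The middle term vanishes off a negligible event by (\ref{nodepfrom4}), and the last term is non-negative, so $D_4(S_1) - D_4(N/3) \le D_4(S_1) - D_4^o(S_1)$. Since $A_{S_1}$ holds by the very definition of $S_1$, and $S_1 \le 2N/\delta$ off a negligible event by Proposition \ref{propSupperbound}, I would revisit the argument of Proposition \ref{propclass4control}---whose dominating i.i.d.\ sum $\sum_{i=1}^{\lceil 2t_0/M \rceil} (\mathscr{L}_i/M^2 + \tilde{Y}_i)/\delta^3$ is monotone in $t_0$---evaluated at $t_0 = S_1 \le 2N/\delta$, to conclude $D_4(S_1) - D_4^o(S_1) \le \delta^3 S_1 \le 2\delta^2 N$ off an exponentially small event.

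For (\ref{class56controlstronger}), I would use the decomposition
\[
\mathscr{V}_{S_1}^N \le (Z_5(N) + Z_6(N)) + (D_4(S_1) - D_4(N)),
\]
since every job visiting Class 5 or Class 6 during $[N, S_1]$ either is present at time $N$ or arrives at Class 5 from Class 4 during $(N, S_1]$. The second term is $\le D_4(S_1) - D_4(N/3) \le 2\delta^2 N$ by (\ref{class4controlwo4}) and monotonicity of $D_4$. The hard part will be showing that $Z_5(N) + Z_6(N)$ is negligible compared to $\delta^2 N$ off an exponentially small event. To handle this, I would exploit the extreme light loading of Classes 5 and 6: Proposition \ref{propclass4control} at $t_0 = N$ gives total arrivals to Class 5 over $[0, N]$ bounded by $\delta^3 N + D_4^o(N) \le 2\delta N$, while the initial workload at Classes 5 and 6 is bounded by a constant times $\delta N$. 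Consequently the joint workload $W_5(t) + W_6(t)$ has negative drift of order $1$ whenever nonempty, and starts from $O(\delta N)$, so a drift/supermartingale estimate analogous to Lemma \ref{lemmadrift} shows this workload drains well before time $N$, yielding $Z_5(N) + Z_6(N) = o(\delta^2 N)$ off an exponentially small event. Combining with the bound on the second term and absorbing the constant into $c_\delta$ gives (\ref{class56controlstronger}).
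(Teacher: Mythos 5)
Your arguments for (\ref{nodepfrom4}) and (\ref{class4controlwo4}) match the paper's approach: both use (\ref{class4boundovert}) to guarantee that, off an exponentially small event, a non-original Class~4 job is always present on $[N/3,S_1]$, so LIFO blocks any service of original jobs there; and both reduce (\ref{class4controlwo4}) to the bound $D_4(S_1)-D_4^o(S_1)< 2\delta^2 N$, which the paper simply cites from (\ref{class4exitcontrol}) of Corollary~\ref{corclass56control} rather than re-running Proposition~\ref{propclass4control}. Your observation that $A_{S_1}$ holds automatically and that the dominating sum is monotone in $t_0$ is the right way to handle the random stopping time.

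For (\ref{class56controlstronger}) there is a genuine gap. You decompose $\mathscr{V}_{S_1}^N \le (Z_5(N)+Z_6(N))+(D_4(S_1)-D_4(N))$ and then try to show $Z_5(N)+Z_6(N)=o(\delta^2 N)$ via a workload drift estimate. Two problems. First, a drift estimate can only show that $W_5+W_6$ hits zero at some time $S_0\le N$; it does not control the job count at the fixed later time $N$, because departures from Class~4 on $(S_0,N]$ --- as many as $D_4(N)-D_4(N/3)$, itself of order $\delta^2 N$ and potentially arriving in a burst just before $N$ --- can all still be sitting at Classes~5 and~6 at time $N$. So $o(\delta^2 N)$ is not obtainable from this reasoning. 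Second, even if it were, you would be adding $o(\delta^2 N)$ to a term bounded only by $2\delta^2 N$, exceeding the exact threshold $2\delta^2 N$ in (\ref{class56controlstronger}). You cannot ``absorb'' this excess into $c_\delta$: that constant only governs the probability of the exceptional event, not the size threshold inside the event.

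The paper's proof avoids the double count via telescoping. From (\ref{class56control}) and the initial conditions, $\mathscr{V}_{S_1}\le 3\delta N$ off an exponentially small event, and together with bounds on $A_1$ and $D_5$ from (\ref{invertedLD}), one obtains a stopping time $S_0\in[N/3,N]$ at which $Z_1(S_0)=Z_5(S_0)=Z_6(S_0)=0$. Every job at Class~5 or~6 during $[N,S_1]$ then arrived from Class~4 after $S_0$, so $\mathscr{V}_{S_1}^N\le D_4(S_1)-D_4(S_0)\le D_4(S_1)-D_4(N/3)$, and (\ref{class4controlwo4}) finishes the proof with the sharp constant. Your decomposition would work if you replaced the drift claim with the bound $Z_5(N)+Z_6(N)\le D_4(N)-D_4(S_0)$ (from emptiness at $S_0$), which makes the two summands telescope back to $D_4(S_1)-D_4(S_0)$.
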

\begin{proof}
By 
(\ref{class4boundovert}) of Proposition \ref{propclass34control},
$$ P(Z_4 (t) \le Z_4 (0) \,  \text{ for some } t \in [N/3,S_1]) \le C_{\delta} \text{e}^{-c_{\delta}N/3} $$
for appropriate $C_{\delta},c_{\delta}>0$. So, off of the exceptional set, no
job originally at Class 4 can depart over $[N/3,S_1]$ because of the LIFO property, and
(\ref{nodepfrom4}) follows.  
Inequality (\ref{class4controlwo4})
 follows from (\ref{class4exitcontrol}) of Corollary \ref{corclass56control}.

For (\ref{class56controlstronger}), note that, by (\ref{class56control}) of 
Corollary \ref{corclass56control} and the initial conditions of 
Theorem \ref{propinductionstep}, 
\begin{equation*}
\label{classes56aftertimeN}
P(\mathscr{V}_{S_1} \ge 3\delta N) \le C_{\delta} \text{e}^{-c_{\delta}N}
\end{equation*}
for appropriate $C_{\delta},c_{\delta}>0$.  It therefore follows after applying
(\ref{invertedLD}) to $A_1(N)$ and $D_5(N)$
that, for some stopping time $S_0 \in [N/3,N]$,
$$ P(Z_k(S_0) = 0 \, \text{ for } k=1,5,6) \ge 1 - C_{\delta} \text{e}^{-c_{\delta}N}$$
for a new choice of $C_{\delta},c_{\delta}>0$.  Display  (\ref{class56controlstronger})
follows from this and (\ref{class4controlwo4}).
 
\end{proof}

Employing (\ref{class56controlstronger}) of Corollary \ref{cornodepfrom4},
we obtain the following stronger version of (\ref{class1control}) of
Proposition  \ref{propclass12control}.

\begin{proposition}
\label{propimprovedclass1}
Let $X(t)$ be the Markov process associated with the queueing network
in Figure \ref{fig1} satisfying (\ref{entrancelaws})-(\ref{means}), with
initial conditions satisfying those of 
Theorem \ref{propinductionstep} for some $N \ge 2M/\delta$.
Then
\begin{equation}
\label{betterclass1control}
P\big(Z_1(t) \ge 7 \delta^2 N \, \text{ for some } t\in [N,S_1]\big) \le C_{\delta} \text{e}^{-c_{\delta}N}.
\end{equation}
\end{proposition}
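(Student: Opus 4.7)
The plan is to mimic the dichotomy argument from the demonstration of $(\ref{class1control})$ in Proposition~$\ref{propclass12control}$, with two refinements: the emptying window shrinks from $3\delta N$ to $3\delta^2 N$, and the interference from Class~6 is controlled by the much tighter Corollary~$\ref{cornodepfrom4}$ estimate $\mathscr{V}_{S_1}^N \le 2\delta^2 N$ rather than by the $[0,S_1]$ bound $\mathscr{V}_{S_1} \le 3\delta N$ used in Proposition~$\ref{propclass12control}$.

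For $s$ a multiple of $3\delta^2 N$ in $[N,\, S_1 - 6\delta^2 N]$, I would define $\mathscr{A}'_s = \{Z_1(t) = 0 \text{ for some } t \in [s,\, s + 3\delta^2 N]\}$ and $\mathscr{B}'_s = \{Z_1(t) \ge 7\delta^2 N \text{ for some } t \in [s + 3\delta^2 N,\, s + 6\delta^2 N]\}$, and show that each of $P(\mathscr{A}'_s \cap \mathscr{B}'_s)$ and $P((\mathscr{A}'_s)^c)$ is at most $C_\delta e^{-c_\delta N}$. For the former, once Class~1 empties at some $t_0 \in [s,\, s + 3\delta^2 N]$, one has $Z_1(t) \le A_1(s + 6\delta^2 N) - A_1(s)$ throughout the later subinterval, and $(\ref{invertedLD})$ combined with the exponential tail of cluster sizes from $(\ref{entrancelaws})$ bounds this arrival count by $7\delta^2 N$. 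For the latter, on $(\mathscr{A}'_s)^c$ the station comprising Classes~1 and~6 is busy throughout $[s,\, s + 3\delta^2 N]$; I would bound the total Class~6 work (initial plus arriving) by $\mathscr{V}_{S_1}^N \cdot m_6 \le 2\delta^2 N$ using Corollary~$\ref{cornodepfrom4}$, and the total Class~1 work by $(Z_1(s) + A_1(s + 3\delta^2 N) - A_1(s))\delta^3 \le 8\delta^4 N$ using $(\ref{class1control})$ and $(\ref{invertedLD})$, giving a combined available work strictly less than $3\delta^2 N$ for small $\delta$, which contradicts continuous busyness.

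A union bound over the $O(1/\delta^3)$ values of $s$ then yields $(\ref{betterclass1control})$ restricted to $t \in [N + 3\delta^2 N,\, S_1]$. The initial sliver $[N,\, N + 3\delta^2 N]$ I would treat separately: the same work-budget computation shows the station empties w.h.p.\ within it, and the event $Z_1(t) \ge 7\delta^2 N$ in the sliver would force either a cluster of size $\ge 7\delta^2 N$ already present at time $N$, or such a cluster arriving during the sliver. Both contingencies are controlled by Lemma~$\ref{lemmaclusters}$ and the exponential tail $P(\mathscr{L}_i \ge \ell) \le e^{-\ell/M}$ derived from $(\ref{entrancelaws})$.

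I expect the main obstacle to be controlling the transient $Z_1$ peaks arising from bursty cluster arrivals at Class~1: under LIFO at the station the whole cluster can momentarily pile up at Class~1 before being dispatched, so the $7\delta^2 N$ threshold is violated precisely when some cluster has size exceeding it. The cluster tail gives $e^{-\ell/M}$ at $\ell = 7\delta^2 N$, which converts to an $e^{-c_\delta N}$ bound only with $c_\delta$ of order $\delta^2/M$; this $c_\delta$ is small but positive, and is adequate for the summability needed in the cycle-level induction of Theorem~$\ref{theorem1}$.
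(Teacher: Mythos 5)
Your main dichotomy argument is essentially the paper's proof: the paper sets
$\mathscr{A}_s = \{Z_1(t) = 0 \text{ for some } t\in [s,s+5\delta^2 N]\}$ and $\mathscr{B}_s  =  \{Z_1(t) \ge 7\delta^2 N \text{ for some } t\in [s+5\delta^2 N, s+6\delta ^2N]\}$, bounds $P(\mathscr{A}_s\cap\mathscr{B}_s)$ by the arrival count over the $6\delta^2 N$ window via (\ref{invertedLD}), and bounds $P(\mathscr{A}_s^c)$ by a work-budget argument that uses (\ref{class56controlstronger}) from Corollary \ref{cornodepfrom4} in place of the weaker (\ref{class56control}), exactly as you do. The difference in window length ($3\delta^2 N$ vs.\ $5\delta^2 N$) is cosmetic.

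However, your treatment of the initial sliver $[N, N+3\delta^2 N]$ contains a genuine error. You claim that $Z_1(t)\ge 7\delta^2 N$ there ``would force either a cluster of size $\ge 7\delta^2 N$ already present at time $N$, or such a cluster arriving during the sliver.'' This does not follow. Under LIFO at Station I, $Z_1$ can accumulate across \emph{many} ordinary-sized Class~1 clusters if Class~6 jobs keep arriving and preempting: a priori the best bound at time $N$ coming from (\ref{class1control}) is $Z_1(N)\le 7\delta N\gg 7\delta^2 N$, and the cluster tail $P(\mathscr{L}_i\ge\ell)\le e^{-\ell/M}$ says nothing about this accumulation. Consequently, your final-paragraph diagnosis that the bottleneck is an individual-cluster large deviation is also off target; in the $\mathscr{A}_s,\mathscr{B}_s$ scheme transient peaks are controlled by the total arrival count $A_1(s+6\delta^2 N)-A_1(s)$ via (\ref{invertedLD}), which is insensitive to the cluster decomposition. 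The correct way to dispatch the sliver (and what the paper's ``follows that of the proof of Proposition \ref{propclass12control}'' implicitly invokes) is to observe, from the proof of Corollary \ref{cornodepfrom4}, that there is a stopping time $S_0\in[N/3,N]$ with $Z_1(S_0)=Z_5(S_0)=Z_6(S_0)=0$, and that the number of jobs ever at Classes~5 and~6 over $[S_0,S_1]$ is at most $2\delta^2 N$; one can therefore run the dichotomy from slightly before $N$ so that the $\mathscr{B}_s$ intervals already cover $[N,S_1]$, with the needed Class~6 work control and the emptying of Class~1 supplied by $S_0\le N$.
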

\begin{proof}
The argument is the same as that for (\ref{class1control}) except for minor
changes.  For given $s$, we now set
\begin{align*}
&\mathscr{A}_s = \{Z_1(t) = 0 \text{ for some } t\in [s,s+5\delta^2 N]\}, \\
\mathscr{B}_s  = & \{Z_1(t) \ge 7\delta^2 N \text{ for some } 
t\in [s+5\delta^2 N, s+6\delta ^2N]\}.
\end{align*}
The events $\mathscr{A}_s \cap \mathscr{B}_s$ and $\mathscr{A}^c_s$
each occur with low probability:    On $\mathscr{A}_s$, there is at most time
$6\delta^2 N$ for at least $7 \delta^2 N$ jobs to arrive at empty  Class 1 in
order for $\mathscr{B}_s$ to hold.  On the other hand, because of (\ref{class56controlstronger}), 
 there are typically at most $2\delta^2 N$
jobs in Class 6 to interfere with the service of Class 1 jobs over
times $[s,s+5\delta^2 N]$.  Also, the service time of Class 1 jobs is  
$ \delta^3$ and the time required to serve all of these jobs is typically strictly less
than $3\delta^2 N$.  So the same reasoning as for (\ref{class1control}) implies that
$\mathscr{A}_s $ will typically occur.

 The remainder of the argument for 
(\ref{betterclass1control}) follows that of the proof of
Proposition  \ref{propclass12control}.

\end{proof}

The following proposition estimates $Z_4(S_1)$ and gives a lower bound
on $Z(t)$ over $[0,S_1]$; it follows quickly from previous results.

\begin{proposition}
\label{corminimalocc}
Let $X(t)$ be the Markov process associated with the queueing network
in Figure \ref{fig1} satisfying (\ref{entrancelaws})-(\ref{means}), with
initial conditions satisfying (\ref{eqtime0-1}) of 
Theorem \ref{propinductionstep} for some $N \ge 2M/\delta$. 
Then
\begin{equation}
\label{class4boundtimeS}
P\big(Z_4(S_1) \in [N/3\delta, 3N/\delta]^c \big) \le C_{\delta} \text{e}^{-c_{\delta}N}
\end{equation}
and
\begin{equation}
\label{smallZbytimeS}
P\big(Z(t) \le N/3 \,\,\, \text{ for any } t\in [0,S_1] \big) \le C_{\delta} \text{e}^{-c_{\delta}N}
\end{equation}
for appropriate $C_{\delta},c_{\delta}>0$.
\end{proposition}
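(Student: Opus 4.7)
The plan is to derive both bounds by combining estimates already established. No new probabilistic machinery is needed; the argument is essentially a matter of assembling existing inequalities with careful attention to the numerical constants.

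For \eqref{class4boundtimeS}, I would begin from the identity $Z_4(S_1) = Z_4(0) + A_4(S_1) - D_4(S_1)$. The initial condition \eqref{eqtime0-1} bounds $Z_4(0) \le \delta N$, while \eqref{class4exitcontrol} of Corollary \ref{corclass56control} gives $D_4(S_1) \le 2\delta^2 N + Z_4(0)$ off a set of probability at most $C_\delta \text{e}^{-c_\delta N}$. Propositions \ref{propSupperbound} and \ref{propclass12control} together place $S_1 \in [N/2\delta, 2N/\delta]$ with the same order of probability. Finally, applying the uniform large deviation estimate \eqref{LDforallt} to the renewal process $A_4$ with, say, $\beta = \delta$ yields $|A_4(S_1) - S_1| \le \delta S_1$ with high probability. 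Combining these four ingredients, $Z_4(S_1)$ lies in $[(1-\delta)S_1 - O(\delta N),\, (1+\delta)S_1 + O(\delta N)]$, which for small $\delta$ is contained in $[N/3\delta,\, 3N/\delta]$.

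For \eqref{smallZbytimeS}, I would split $[0,S_1]$ at $N/2$ and bound $Z(t)$ separately on each piece. On the early subinterval $[0, N/2]$, the estimate $Z_2(t) \ge Z_2(0) - D_2(t) = N - D_2(t)$ together with the large deviation bound \eqref{nofast2departures} used in the proof of Proposition \ref{propclass12control} yields $Z_2(t) \ge N - (1+\delta+2\delta^2)(N/2) - \delta N \ge N/3$ for small $\delta$, so $Z(t) \ge Z_2(t) \ge N/3$. On the late subinterval $[N/2, S_1] \subseteq [N/3, S_1]$, inequality \eqref{class4boundovert} of Proposition \ref{propclass34control} gives $Z_4(t) \ge (1-5\delta)t \ge (1-5\delta)(N/2) \ge N/3$ for small $\delta$, so $Z(t) \ge Z_4(t) \ge N/3$.

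There is no substantive obstacle here; the only care required is arranging constants so that the $O(\delta N)$ correction terms do not spoil the target intervals $[N/3\delta, 3N/\delta]$ and the lower bound $N/3$. Both assertions then follow by intersecting the finitely many good sets appearing above, each of complement probability at most $C_\delta \text{e}^{-c_\delta N}$, which yields a union bound of the same form after adjusting $C_\delta$.
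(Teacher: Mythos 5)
Your proof is correct and follows essentially the same route as the paper: it assembles the bounds on $S_1$ from Propositions~\ref{propSupperbound} and~\ref{propclass12control}, the arrival estimates from Lemma~\ref{lemmadeviations}, and the control on $Z_4$/$D_4$ from Propositions~\ref{propclass34control} and Corollary~\ref{corclass56control}, then splits $[0,S_1]$ at $N/2$ with $Z_2$ carrying the lower bound on the first piece and $Z_4$ on the second. The only difference from the paper's proof is a harmless bookkeeping swap: the paper invokes~\eqref{class4boundovert} for~\eqref{class4boundtimeS} and~\eqref{class4exitcontrol} for the late interval of~\eqref{smallZbytimeS}, while you do the reverse; since both bounds encode nearly the same information about Class~4, the two routes are interchangeable.
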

\begin{proof}
The inequality in (\ref{class4boundtimeS}) follows from the upper and lower
bounds on $S_1$ given in (\ref{Supperbound}) and
(\ref{Slowerbound}), the lower bound on $Z_4(t)$ in  (\ref{class4boundovert}),
and by applying (\ref{invertedLD}) to $A_4(t)$.  

Up until time $N/2$, the inequality for
(\ref{smallZbytimeS}) follows from $Z_2(0) = N$ and by applying (\ref{invertedLD})
to $D_2(t)$; on $[N/2, S_1]$, the inequality follows from  (\ref{class4exitcontrol})
 and by again applying (\ref{invertedLD}) to $A_4(t)$.

\end{proof}

Combining the preceding results, we obtain Theorem \ref{theorembehavioratS}.
%
\begin{proof} [\! Proof of Theorem  \ref{theorembehavioratS}.]
The assertion for $S_1$ follows from (\ref{Supperbound}) of Proposition 
\ref{propSupperbound} and 
(\ref{Slowerbound}) of Proposition \ref{propclass12control}.
The assertion for $Z_4(S_1)$ follows from (\ref{class4boundtimeS}) of 
Proposition \ref{corminimalocc}, and that for $W_3(S_1)$ follows from
(\ref{class3control}) of Proposition \ref{propclass34control}.  The
assertion for
 $Z_k(S_1)$, for other $k$, follows from (\ref{class56controlstronger}) of
Corollary \ref{cornodepfrom4},
(\ref{betterclass1control}) of Proposition \ref{propimprovedclass1},
 and $Z_2(S_1) = 0$.  The assertion
for $Z(t)$ follows from 
(\ref{smallZbytimeS}) of Corollary \ref{corminimalocc}.  

\end{proof}

 \section{Demonstration of Theorem  \ref{propinductionstep}}
\label{proofoftheorem}

\begin{proposition}
\label{propatS_2}
Let $X(t)$ be the Markov process associated with the queueing network
in Figure \ref{fig1} satisfying (\ref{entrancelaws})-(\ref{means}).   Suppose
that, for some $N$ and $a \in [1,N]$,
\begin{equation}
\label{eqtimeS_2}
Z_4 (0) = N, \quad W_3(0) \le a \delta^3 N, 
\quad \sum_{k\neq 3,4} Z_k(0) \le a \delta^3 N.
\end{equation}
Then there exists a 
stopping time $S_2$,  with $S_2 \le 4a \delta^2 N$, such that
\begin{align}
\label{behavioratS_2}
& Z_5(S_2) \ge (1 - 5a \delta^2) N, \quad Z_1(S_2)+Z_2(S_2)\le 5a\delta^2 N, \\
& \quad \,\, \,\,  Z_3(S_2) =  Z_4(S_2) = 0, 
 \quad W_6(S_2) \le 10a\delta^5 N \notag, 
\end{align}
and
\begin{equation}
\label{behaviorbetweenS_1S_2}
Z (t) \ge (1-5a\delta^2)N,  \quad \text{for all } t\in [0,S_2],
\end{equation}
all hold on a set $G_{S_2}$ with $P(G_{S_2}) \ge 1- C_{\delta}\text{e}^{-c_{\delta}N}$
for some $C_{\delta},c_{\delta}>0$.
\end{proposition}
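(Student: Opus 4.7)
My plan is to define $S_2 := \inf\{t > 0 : Z_3(t) + Z_4(t) = 0\}$ and verify the six assertions on a common high-probability event $G_{S_2}$, using the large-deviation bounds of Lemmas~\ref{lemmadeviations} and~\ref{lemmadrift} together with (\ref{invertedLD})--(\ref{LDforallt}).  The strategy is to first control the length of $[0,S_2]$ via the total immediate workload at Station IV, and then to extract the remaining bounds using conservation identities combined with concentration of the various arrival and service streams.

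To bound $S_2 \le 4a\delta^2 N$, I would track $W_{IV}(t) := W_3(t) + W_4(t)$, starting from $W_{IV}(0) \le W_3(0) + Z_4(0) m_4 \le (a+1)\delta^3 N$.  Station IV drains at unit rate whenever $W_{IV} > 0$; external Class 4 work arrives at mean rate $m_4 = \delta^3$, and work arriving at Class 3 is bounded by $m_3$ times the throughput of Class 2, which is in turn at most the input rate $A_1(t) + Z_1(0) + Z_2(0)$.  Using (\ref{invertedLD}) for $A_1$, $A_4$, and $D_2$, on a large-deviation event the total arriving work by time $t$ is at most $(m_3 + m_4)\,t(1+\epsilon) + 2a m_3 \delta^3 N$; since $1 - (m_3+m_4) = \delta - 2\delta^3$, solving $W_{IV}(S_2) = 0$ yields $S_2 \le 4a\delta^2 N$ for $\delta$ small.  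Given this, $Z_1(S_2) + Z_2(S_2) \le Z_1(0)+Z_2(0)+A_1(S_2) \le 2a\delta^3 N + S_2(1+\delta^2) \le 5a\delta^2 N$ is immediate.  For $Z_5(S_2)$ and the lower bound on $Z(t)$, I would use the identity $Z_4(t) + Z_5(t) = Z_4(0) + Z_5(0) + A_4(t) - D_5(t) \ge N - D_5(t)$ and bound $D_5(S_2) \le S_2/m_5 + \text{error} \le 5a\delta^2 N$ via (\ref{invertedLD}) applied to the exponential Class 5 service; this yields $Z(t) \ge (1-5a\delta^2) N$ on $[0,S_2]$, and at $t = S_2$ (where $Z_4(S_2) = 0$) the corresponding lower bound on $Z_5(S_2)$.

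The main technical obstacle is $W_6(S_2) \le 10a\delta^5 N$.  The key structural fact is that $m_6/m_5 = 1 + \delta^3/(1-\delta)$, so work arrives at Class 6 from Class 5's output (near-Poisson at rate $1/m_5$ whenever Class 5 is non-empty) at a rate only $O(\delta^3)$ above Station I's unit service rate, while Class 1 occupies at most an $O(\delta^3)$ fraction of Station I's time.  The net $W_6$-drift is therefore $O(\delta^3)$, yielding a workload accumulation of $O(\delta^3 S_2) = O(a\delta^5 N)$ over $[0,S_2]$; I would formalize this via Lemma~\ref{lemmadrift} with $\eta \sim \delta^3$ applied to a dominating workload process for Class 6, combined with a separate bound on Station I's Class 1 busy time.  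The delicate piece is absorbing the initial contribution $W_6(0)$, since the hypothesis only yields the coarse bound $W_6(0) \le Z_6(0) m_6 \le a\delta^3 N$; here one must exploit the LIFO structure at Station I during the brief initial transient (before Class 5 is delivering to Class 6 at its full steady rate) to show that the initial Class 6 workload is effectively cleared into the $O(a\delta^5 N)$ accumulation.  A union bound over the good events from all four steps then gives $P(G_{S_2}) \ge 1 - C_\delta e^{-c_\delta N}$.
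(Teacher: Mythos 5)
Your proof follows the paper's argument closely: the paper also bounds $S_2$ by showing the total work ever present at Station IV over $[0,4a\delta^2 N]$ is strictly less than $4a\delta^2 N$ (equivalently, tracking $W_{IV}$ as you do), then bounds $Z_1(S_2)+Z_2(S_2)$ via $A_1(S_2)$ and the initial condition, bounds $Z_5(S_2)$ and the lower bound on $Z(t)$ via $D_5(S_2)$, and bounds $W_6(S_2)$ using $m_1 = m_6 - m_5 = \delta^3$ together with the bounds on $A_1(S_2)$ and $D_5(S_2)$. The arithmetic and use of (\ref{invertedLD})--(\ref{LDforallt}) and Lemma~\ref{lemmadrift} match.

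However, you correctly put your finger on the one genuinely delicate step, and your proposal leaves it unresolved. The paper's proof computes the ``excess'' work arriving at Station I as $m_1 A_1(S_2) + (m_6-m_5)D_5(S_2) \le 2\delta^3 \cdot 5a\delta^2 N = 10a\delta^5 N$, which is a bound on arrivals net of the rate-$1$ drain, \emph{not} a bound that automatically absorbs $W_6(0)$. The hypothesis (\ref{eqtimeS_2}) controls only $Z_6(0) \le a\delta^3 N$, so $W_6(0)$ can a priori be of order $a\delta^3 N$, dwarfing the target $10a\delta^5 N$. Your proposal acknowledges this and says one must ``exploit the LIFO structure at Station I during the brief initial transient,'' but does not actually carry out that argument. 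The missing step is to show Station I idles at some time $\tau < S_2$, so that $W_I(S_2)$ depends only on work arriving over $(\tau, S_2]$, where the $10a\delta^5 N$ accounting applies cleanly; this in turn requires showing Class 5 is idle over an initial interval of length $\gtrsim a\delta^3 N$, which hinges on Class 4's departures being suppressed early on (e.g.\ because $W_3(0) > 0$ blocks Class 4). If $W_3(0) = 0$ and the Class 6 jobs at time $0$ are all fresh, this suppression fails and the conclusion is not obviously recoverable from (\ref{eqtimeS_2}) alone. So this is a real gap in the proposal as written, not merely an omitted routine estimate; you should either supply the argument that Station I necessarily idles early, or argue that the application of the proposition (restarting at $S_1$) only ever produces states with $W_6(0)$ already of order $a\delta^5 N$, which would require strengthening Theorem~\ref{theorembehavioratS} to include a $W_6(S_1)$ bound.
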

\begin{proof}
By showing that, off of an exceptional set, the amount of work ever to be present at Station IV over 
$[0,4a\delta^2 N]$ is strictly less than 
$4a\delta^2 N$, 
it will follow that
\begin{equation}
\label{stationivbecomes0}
Z_3(t) = Z_4(t) = 0 \quad \text{for some } t\le 4a\delta^2 N,
\end{equation}
which implies the first part of the claim by setting $S_2$ equal to
the first such $t$.

By (\ref{invertedLD}),
\begin{equation}
\label{shorttimearrivalsS2}
P\big(A_1 (4a\delta^2 N) \ge 4a \delta^2 (1+\delta^2) N \big) \le C_{\delta} \text{e}^{-c_{\delta}N}
\end{equation}
for appropriate $C_{\delta}, c_{\delta} > 0$.  By (\ref{eqtimeS_2}), off of this exceptional set, the
number of jobs arriving at Class 3 over $(0,4a\delta^2 N]$ is at most
$ 4a \delta^2 \big(1+ \delta /4 +  \delta^2 \big) N$. Since
$m_3 = 1 - \delta +  \delta^3$, it follows from this that the amount of work
arriving at Class 3 over $(0,4a\delta^2 N)$ is at most 
\begin{equation}
\label{influenceatclass3S2}
4a \delta^2 \big(1+ \delta /4 + \delta^2 \big) \big(1-\delta +\delta^3 \big) N \le 
4a\delta^2  \big(1- 3\delta /4 + \delta^2 \big)N;
\end{equation}
together with
$W_3(0) \le a \delta^3 N$, this implies that the total amount of work ever
to be at Class 3 over $[0,4a\delta^2 N]$ is at most 
$4a\delta^2 (1-\delta/2 + \delta^2)N$.

The analog of (\ref{shorttimearrivalsS2}), but for Class 4, togther with
$m_4 = \delta^3$ and $Z_4(0) = N$, implies that the total amount of work ever
to be at Class 4 over $[0,4a\delta^2 N]$ is at most 
$$ \delta^3 \big(N + 4a \delta^2 (1+\delta^2) N \big).$$
Adding this bound to the bound in (\ref{influenceatclass3S2}) shows that the
 total amount of work ever to be at Station IV over $[0,4a\delta^2 N]$ is
at most 
$$4a\delta^2 \big( 1  - \delta /4 + 2\delta^2  \big) N < 4a\delta^2M,$$
which implies 
(\ref{stationivbecomes0}).

By applying (\ref{invertedLD}) to $A_1 (S_2)$
and $Z_1(0) + Z_2(0) \le a\delta^3 N$, 
$$
P(Z_1(S_2) + Z_2(S_2) \ge 5a\delta^2 N) 
\le C_{\delta}\text{e}^{-c_{\delta}N}$$
and, by applying (\ref{invertedLD}) to  $D_5 (S_2)$ and the definition of
$S_2$,
\begin{equation}
\label{Z5overS2interval}
P(Z_5(S_2) \le (1-5a\delta^2)N) \le  C_{\delta}\text{e}^{-c_{\delta}N},
\end{equation}
for some $C_{\delta},c_{\delta}>0$.  Moreover, because of
$m_1 = m_6 - m_5 = \delta^3$
 and the above two bounds on $A_1 (S_2)$ and $D_5(S_2)$, the total amount
of work ever to be at Station I over $[0,S_2]$, and hence at Class 6,  is
at most 
$$2\delta^3 \cdot 5a\delta^2 N  = 10a\delta^5$$
off an exceptional set.
Since $Z_3(S_2) = Z_4(S_2) = 0$ by the definition of $S_2$, this completes
the demonstration of (\ref{behavioratS_2}).
The same reasoning as for (\ref{Z5overS2interval}) implies
 (\ref{behaviorbetweenS_1S_2}),
which completes the proof of the proposition.

\end{proof}

The proof of Theorem \ref{propinductionstep} follows quickly from Theorem \ref{theorembehavioratS} and Proposition \ref{propatS_2} by setting 
$N = Z_4(S_1)$ and $a = 63$ in Proposition \ref{propatS_2}.
(The factor 63 is used because of the 
possible range of 9 for $Z(S_1)$ in (\ref{behavioratS_2}) and the coefficient 7 
in the
bounds on $Z_k(S_1)$.)  
%
\begin{proof}[Proof of Theorem \ref{propinductionstep}]
$\,$ Setting $T = S_1 + S_2$, the lower bound on $T$ is immediate from 
Theorem \ref{theorembehavioratS} and the upper bound also follows
because $S_2 \le N$ off of the exceptional set in Proposition \ref{propatS_2}.  The bound on $Z_5(T)$
follows from the lower bound on
$Z_4(S_1)$ in (\ref{behavioratS_1}) and the lower bound on $Z_5(S_2)$
in (\ref{behavioratS_2}).  The bound on $Z_1(T) + Z_2(T)$ follows from that
in (\ref{behavioratS_1}) with $a=63$; since Station IV is empty at time $T$, $Z_3(T) = Z_4(T) =0$.  The bounds on $W_3(S_1)$ in  (\ref{behavioratS_1})
and $W_6(S_2)$ in (\ref{behavioratS_2}) imply the bound on $W_6(T)$.
The lower bound on $Z(t)$ over $[0,T]$ follows from the corresponding bounds
in (\ref{behaviorattimesbeforeS}) and (\ref{behaviorbetweenS_1S_2}).
 
\end{proof}

{\bf Remark 4} $\,$ We commented after Theorem \ref{theorem1} that the 
analog of (\ref{firstlimit}) holds for the total amount of work in the network.
More precisely, we denote by $\mathscr{W}(t)$ the \emph{total workload} in the network at time $t$, that is, the sum of the immediate workload due to the residual service times of jobs currently at their respective classes, together with the service times of these jobs at all classes they will visit before leaving the network.  Then
\begin{equation}
\label{eqTWlimit}
\mathscr{W}(t) \rightarrow\infty \quad \text{almost surely as } t\rightarrow\infty.
\end{equation}

We sketch here the argument for (\ref{eqTWlimit}), using 
bounds from the demonstration of
Theorem \ref{propinductionstep}.
Under the assumptions in 
(\ref{eqtime0-1}) of Theorem \ref{propinductionstep}, the reasoning for (\ref{smallZbytimeS}) of Proposition \ref{corminimalocc} implies that
\begin{equation}
\label{smallZbytimeSfor2}
P\big(\text{both } Z_2(t) \le N/3 \, \text{ and } \,  Z_4(t) \le N/3 \,\, \text{ for any } t\in [0,S_1] \big) \le C_{\delta} \text{e}^{-c_{\delta}N}
\end{equation}
and the reasoning for (\ref{Z5overS2interval}) of Proposition \ref{propatS_2}
implies that
\begin{equation}
\label{Z5overS2intervalforTW}
P\big(Z_4(t) + Z_5(t) \le (1-5a\delta^2)N  \,\, \text{ for any } t\in [S_1,T] \big) \le  C_{\delta}\text{e}^{-c_{\delta}N},
\end{equation}
where, in each case, $C_{\delta},c_{\delta}>0$.  
In particular, (\ref{smallZbytimeSfor2}) was obtained by bounding
the number of jobs at Class 2, at time 0, that can leave Class 2 over $[0,N/2]$, and the number of
jobs at Class 4, at time $N/2$, that can leave Class 4 over $[N/2,S_1]$;
and (\ref{Z5overS2intervalforTW}) was obtained by
bounding the number of jobs that can leave Class 5 over $[S_1,T]$.

The service laws
at Classes 2 and 5 are each exponentially distributed with mean $1-\delta$,
and jobs currently at Class 4 must pass through Class 5 before leaving the network.
Together with
the previous paragraph and elementary large deviations
estimates, these
observations imply that
\begin{equation}
\label{stepforTW}
P(\mathscr{W}(t) \le N/6  \,\,\, \text{ for any } t\in [0,T]) \le C_{\delta} \text{e}^{-c_{\delta}N} 
\end{equation}
for appropriate $C_{\delta},c_{\delta}>0$.
The limit (\ref{eqTWlimit}) follows from (\ref{stepforTW}) and the same 
induction argument as for (\ref{eqGlimit}) in the proof
of Theorem \ref{theorem1}.

\section{Appendix}
\label{secappendix}
As mentioned in Subsection \ref{sechistorical}, the processor sharing (PS) discipline is stable for all subcritical queueing 
networks with Poisson input and exponentially distributed service times;
its equilibrium distribution can be written explicitly in its famous ``product
form". 
As with symmetric queueing networks in general, the exponentially
distributed service law can
be relaxed to a mixture of Erlang distributions.
(An Erlang distribution is the distribution of a sum of i.i.d. exponentially distributed random variables.)
This generalization employs the \emph{method of stages}
(see, e.g., \cite{Br08}, \cite{Ke79}), and the resulting equilibria are again explicit. 
It is not difficult to check that mixtures of Erlang distributions are dense in the weak topology in the set of distribution functions (see, e.g., Exercise 3.3.3 in \cite{Ke79}.)

Consider a subcritical queueing network with Poisson input, but arbitrary
service distribution.  Because of the explicit nature of the equilibria  for mixtures of Erlang distributions, one can choose a sequence of 
queueing networks whose service distributions converge to the 
service distributions of the given network
and the corresponding sequence of equilibria is tight.  Because of this, the above
representation of equilibria extends to all subcritical  PS queueing networks with Poisson input
(\cite{Ba76}).  In particular, these queueing networks with general service distributions are stable.

This explicit representation no longer holds when external arrivals 
are generalized to renewal processes.  However, for networks with 
 general interarrival distributions (assuming only (\ref{unboundedness}) and
(\ref{abscontinuity}) below)
but where the service times are exponentially distributed, one
can compare PS networks with networks with the
head-of-the-line processor sharing (HLPPS) discipline.  The service rule for
the HLPPS discipline is the same as that for PS, except that all service 
a class receives is devoted to the earliest arriving job at that class, rather
than being spread out uniformly among jobs at that class.  When
the service distributions are exponentially distributed, the specific rule
assigning service within a class does not affect the rate at which jobs leave
the class, and so processes with the PS and HLPPS disciplines and
exponentially distributed service times have the same law.

The stability of subcritical HLPPS networks can be shown 
under interarrival distributions satisfying  (\ref{unboundedness}) and
(\ref{abscontinuity}),
and general service distributions by employing the standard machinery of fluid limits
(Corollary 1 of Theorem 1, in \cite{Br96b}).  This technique unfortunately produces 
little qualitative information about the
nature of the equilibria for the corresponding queueing networks.

In this appendix, we make two observations.  First, that the 
connection between the PS and HLPPS disciplines, together with the method of stages, enables
one to quickly show, using known results, the stability of subcritical PS networks, with interarrival 
times satisfying  (\ref{unboundedness}) and
(\ref{abscontinuity}), for a dense family of service distributions.  This is
done in Proposition \ref{propPSHLPPS} below.

The second observation is that it nevertheless appears to be difficult
to extend this result to all service distributions.  One cannot
use the above argument that was applied for Poisson input without somehow first showing the tightness of the equilibria for the corresponding sequence of
queueing networks.
 Because of lack of a direct characterization of
these equilibria, it is not clear how to proceed. Nevertheless,
based on ``obvious" intuition, such stability should hold for subcritical PS
queueing networks with both general renewal input and service distributions.

For  Proposition \ref{propPSHLPPS}, we first state Corollary 1 of Theorem 1 (\cite{Br96b}) when the
service times are exponentially distributed.  Two 
assumptions on the interarrival times are required:  The interarrival time distributions are unbounded, that is, denoting  by $\xi_k (i)$, $i\in \mathbb{Z}_+$, the i.i.d. 
interarrival times at a class $k$ with external arrivals,
\begin{equation}
\label{unboundedness}
P(\xi_k(1) \ge x)  > 0 \quad \text{ for all } x.
\end{equation}
Moreover, for some $\ell_k > 0$ and non-negative $q_k(\cdot)$, with
$\int_0^{\infty}q_k(x)dx > 0$,
\begin{equation}
\label{abscontinuity}
P(\xi_k(1) + \cdots +\xi(\ell_k) \in dx) \ge q_k(x) dx,
\end{equation}
that is, the above sum dominates Lebesque measure in an 
appropriate sense.
We note that both properties are only needed to ensure that all states communicate with one another; they are not needed to show that the total number of jobs in the network $Z(t)$ is
tight (without these conditions, the residual interarrival times could synchronize in
some manner).

\begin{proposition}
\label{propHLPPS}
Any subcritical HLPPS queueing network with exponentially distributed service
times and whose external interarrival
times satisfy (\ref{unboundedness}) and (\ref{abscontinuity}) is stable.
\end{proposition}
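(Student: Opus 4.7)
The plan is to invoke the general fluid limit machinery of \cite{RySt92} and \cite{Da95}, specialized to HLPPS as carried out in \cite{Br96b}; the proposition is in fact Corollary~1 of Theorem~1 of the latter, restricted to exponential service, so the role of this proof is to indicate how the ingredients fit together.

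First, I would set up the state descriptor for the Markov process $X(t)$: the class sizes $Z_k(t)$, the residual interarrival times at the externally fed classes, and (only) the residual service time of the head-of-line job at each class. Exponentiality of the service distributions makes ages of non-HOL jobs unnecessary by memorylessness. Under (\ref{unboundedness}) and (\ref{abscontinuity}), the standard argument shows that $X(t)$ is a strong Markov process on a Polish state space and that all states communicate with the empty state in the sense required by the Harris recurrence criterion in \cite{Da95}.

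Second, I would form the fluid-scaled processes $\bar X^{(n)}(t) := X^{x_n}(nt)/|x_n|$ along sequences with $|x_n|\to\infty$, and show tightness by the usual functional-law-of-large-numbers bounds on the external arrival and potential service processes (these are controlled via Lemma~\ref{lemmadeviations} applied to each renewal stream). Any subsequential weak limit $\bar x(t)$ is then a solution of the HLPPS fluid model: at each station $s$, the full service effort is allocated at each instant to class $k$ with fraction $\bar z_k/\sum_{j \in s}\bar z_j$, the service rates being $1/m_k$. This reduction to a deterministic dynamical system is the content of the fluid-limit step.

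Third, I would appeal to the HLPPS Lyapunov function from \cite{Br96b}, an entropy-like functional $H(\bar z) = \sum_k w_k \bar z_k \log \bar z_k$ with weights $w_k$ chosen from the traffic equations, and verify that $\frac{d}{dt} H(\bar z(t)) \le -\varepsilon$ on $\{|\bar z(t)|>0\}$, where $\varepsilon>0$ depends only on the subcriticality gap $1 - \max_s \rho_s$. Hence fluid solutions reach $0$ in time linear in their initial norm; Dai's theorem then converts this fluid stability into positive recurrence of $X(t)$, and (\ref{unboundedness}) together with (\ref{abscontinuity}) promotes this to Harris ergodicity.

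The main obstacle is the drift estimate on $H$: one must show that under HLPPS the entropy dissipation is bounded away from zero uniformly in the fluid state, even though the instantaneous service allocation at each station can be heavily skewed toward the largest class. This is exactly the technical content of \cite{Br96b} and relies on the identity that, for HLPPS fluid models with exponential service, the class-level fluid dynamics coincide with those of PS on time scales relevant to the Lyapunov calculation. Once this drift inequality is in hand, the remaining pieces are standard.
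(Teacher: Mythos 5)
The paper gives no proof of Proposition~\ref{propHLPPS}: it is stated verbatim as Corollary~1 of Theorem~1 of \cite{Br96b}, specialized to exponential service times, and the text immediately preceding it says exactly this. Your proposal correctly identifies the result as a restatement of that corollary, which is all the paper itself does, so at that level the two approaches agree. Your subsequent sketch of the fluid-limit machinery behind \cite{Br96b} (state descriptor keeping only HOL residuals, tightness of the fluid-scaled process, an entropy Lyapunov function, Dai's criterion converting fluid stability into positive Harris recurrence, and the role of (\ref{unboundedness})--(\ref{abscontinuity}) in ensuring communication of states) is in the right spirit, but your last paragraph mischaracterizes the technical content. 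The Lyapunov argument in \cite{Br96b} is carried out for HLPPS fluid models with \emph{general} service distributions directly; it does not rely on an identity reducing HLPPS fluid dynamics to PS fluid dynamics under exponential service. The observation that exponential service makes HLPPS and PS stochastically identical is a separate step, used in the present paper only to deduce Proposition~\ref{propPSHLPPS} \emph{from} Proposition~\ref{propHLPPS}, not as an ingredient in the proof of Proposition~\ref{propHLPPS} itself. This does not affect the validity of the citation, but it reverses the logical direction of the HLPPS/PS comparison.
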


In Section \ref{intro}, LIFO queueing networks, with routing given by
Figure \ref{fig1}, were reinterpreted as LIFO networks, with routing given by
 Figure \ref{fig2},  by decomposing the service
time at a class into  service times at successive
classes at the same station. 
Since the PS
discipline is symmetric, the same reasoning 
can be applied to it as well; in fact, since service 
is assigned uniformly
to all jobs within a station, it is easy to see that
any reclassification of classes within a station will not affect
service.  

This reasoning can be applied to service times that are mixtures of 
Erlang distributions, as in the 
method of stages.   One thus extends results on the
equilibria for subcritical PS networks with renewal external arrivals and 
exponentially distributed service times to subcritical PS networks with renewal external arrivals and service 
times that
are mixtures of Erlang distributions.  This reasoning was applied
for
Poisson external arrivals (see, e.g., \cite {Ke79}).  In the current setting,
one does not obtain an explicit formula for equilibria, but stability nevertheless
follows:
\begin{proposition}
\label{propPSHLPPS}
Any subcritical PS queueing network whose external interarrival times satisfy
 (\ref{unboundedness}) and (\ref{abscontinuity}) and whose 
service times are mixtures of Erlang distributions is stable.
\end{proposition}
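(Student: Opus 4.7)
The plan is to reduce Proposition \ref{propPSHLPPS} to Proposition \ref{propHLPPS} via the method of stages, using the fact that PS and HLPPS coincide in law when all service times are exponential.

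First, I would build an auxiliary queueing network obtained from the given one by replacing each class $k$ whose service distribution is a mixture $\sum_j p_{k,j}\,\text{Erlang}(n_{k,j},\mu_{k,j})$ by a family of subclasses at the same station. A job arriving at class $k$ is routed, with probability $p_{k,j}$, through a chain of $n_{k,j}$ new subclasses, each with exponential service at rate $\mu_{k,j}$; upon completing the chain the job continues along the original routing of class $k$. Because the PS discipline is symmetric, service is allocated uniformly to all jobs at a station irrespective of how their classes within the station are labeled, so this reclassification produces an auxiliary PS network whose per-station dynamics can be coupled to those of the original network: the position of a job within its Erlang chain in the auxiliary network encodes its current Erlang stage in the original.

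Next, I would verify the hypotheses of Proposition \ref{propHLPPS} for the auxiliary network. Subcriticality is preserved because the mean load at each station is unchanged: at each original class $k$, the new total mean service $\sum_j p_{k,j}\,n_{k,j}/\mu_{k,j}$ equals the mean of the original mixture. Conditions (\ref{unboundedness}) and (\ref{abscontinuity}) are inherited since the external arrival processes are untouched. All service times in the auxiliary network are exponential, so, as noted in the discussion immediately preceding Proposition \ref{propHLPPS}, the memoryless property implies that PS and HLPPS produce stochastically equivalent class-occupancy processes. Proposition \ref{propHLPPS}, applied to the auxiliary network viewed under HLPPS, therefore yields positive recurrence of the associated Markov process.

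Finally, I would transfer stability back to the original network. Under the coupling constructed in the first step, the total number of jobs in the original network at time $t$ equals the total number of jobs summed over the substations corresponding to each original station in the auxiliary network, and is in particular dominated by the total number of jobs in the auxiliary network. So positive recurrence of the auxiliary process yields tightness for the original. The main obstacle I expect is making this last translation into a rigorous statement of positive recurrence for the original Markov process on its own state space: one must check that residual service times in the original correspond measurably to stage positions in the auxiliary, and that returns to compact sets of the auxiliary state descriptor yield returns to compact sets of the original one. Given the symmetry of PS and the explicit nature of the method-of-stages decomposition, this bookkeeping is routine but must be verified.
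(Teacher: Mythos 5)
Your proposal is correct and follows essentially the same route as the paper: apply the method-of-stages decomposition of the Erlang mixtures into chains of exponential subclasses, use the symmetry of PS to argue that this reclassification of classes within a station leaves the dynamics unchanged, invoke the equivalence in law of PS and HLPPS when service is exponential, and then apply Proposition~\ref{propHLPPS}. You are somewhat more explicit than the paper about the coupling, the preservation of subcriticality, and the bookkeeping needed to transfer positive recurrence between the two state descriptions, but the underlying argument is the same.
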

\begin{proof}
By Proposition \ref{propHLPPS}, any HLPPS network whose external
interarrivals satisfy (\ref{unboundedness}) and (\ref{abscontinuity})
and whose service times are exponentially distributed is stable.  By
the above reasoning, the
same queueing network, but with the PS rather than the  HLPPS discipline,
is also stable.  On account
of the PS discipline,
the evolution of a queueing network will be the same when
classes at a given same station are combined into a single class.  
It follows from this that a subcritical PS network  whose external
interarrival times satisfy
 (\ref{unboundedness}) and (\ref{abscontinuity}) and whose 
 service times are mixtures of
Erlang distributions will be stable.

\end{proof}

\bibliography{master}     

\end{document}